\numberwithin{equation}{section}
\newtheorem{theorem}{Theorem}[section]
\newtheorem{proposition}[theorem]{Proposition}
\newtheorem{corollary}[theorem]{Corollary}
\newtheorem{lemma}[theorem]{Lemma}
\theoremstyle{definition}
\theoremstyle{remark}
\newtheorem*{remark}{Remark}
\newtheorem{example}{Example}
\begin{document}
%%%%%%%%%%%%%%%%%%%%%%%%%%%%%%%%%%%%%%%%%%%%%%%%%
%%%%%%%%%%%%  macrodefinitions
%%%%%%%%%%%%%%%%%%%%%%%%%%%%%%%%%%%%%%%%%%%%%%%%%
%  Macros (general)
%%%%%%%%%%%%%%%%%%%%%%%
%\newcommand{\MgNekp}{\mathcal{M}_{g,N+1}^{(k,p)}} %% moduli space
%\newcommand{\M}{\mathcal{M}_{g,N+1}^{(1)}}
\newcommand{\M}{\mathcal{M}}
\newcommand{\F}{\mathcal{F}}

\newcommand{\Teich}{\mathcal{T}_{g,N+1}^{(1)}}
\newcommand{\T}{\mathrm{T}}
%%%%   temporary
\newcommand{\corr}{\bf}
\newcommand{\vac}{|0\rangle}
\newcommand{\Ga}{\Gamma}
\newcommand{\new}{\bf}
\newcommand{\define}{\def}
\newcommand{\redefine}{\def}
\newcommand{\Cal}[1]{\mathcal{#1}}
\renewcommand{\frak}[1]{\mathfrak{{#1}}}
\newcommand{\Hom}{\rm{Hom}\,}
%%%%%%%%%%%%%%%%%%%%%%%%%%%%%%%%%%%%
%   Referencing Scheme of Martin
%%%%%%%%%%%%%%%%%%%%%%%%%%
\newcommand{\refE}[1]{(\ref{E:#1})}
\newcommand{\refCh}[1]{Chapter~\ref{Ch:#1}}
\newcommand{\refS}[1]{Section~\ref{S:#1}}
\newcommand{\refSS}[1]{Section~\ref{SS:#1}}
\newcommand{\refT}[1]{Theorem~\ref{T:#1}}
\newcommand{\refO}[1]{Observation~\ref{O:#1}}
\newcommand{\refP}[1]{Proposition~\ref{P:#1}}
\newcommand{\refD}[1]{Definition~\ref{D:#1}}
\newcommand{\refC}[1]{Corollary~\ref{C:#1}}
\newcommand{\refL}[1]{Lemma~\ref{L:#1}}
\newcommand{\refEx}[1]{Example~\ref{Ex:#1}}
%%%%%%%%%%%%%%%%%%%%%%%%%%%%%%%%%%
\newcommand{\R}{\ensuremath{\mathbb{R}}}
\newcommand{\C}{\ensuremath{\mathbb{C}}}
\newcommand{\N}{\ensuremath{\mathbb{N}}}
\newcommand{\Q}{\ensuremath{\mathbb{Q}}}
\renewcommand{\P}{\ensuremath{\mathcal{P}}}
\newcommand{\Z}{\ensuremath{\mathbb{Z}}}
%%%%%%%%%%%%%%%%%%%%%%%%%%%%%%%%%%%%%%%%%%
\newcommand{\kv}{{k^{\vee}}}
%%%%%%%%%%%%%%%%%%%%%%%%%%%%%%%%%%%%%%%%%%%%%
\renewcommand{\l}{\lambda}
%%%%%%%%%%%%%%%%%%%%%%%%%%%%%%%%%%%%%%%%%%%%%%%%%%
\newcommand{\gb}{\overline{\mathfrak{g}}}
\newcommand{\dt}{\tilde d}     % Oleg
\newcommand{\hb}{\overline{\mathfrak{h}}}
\newcommand{\g}{\mathfrak{g}}
\newcommand{\h}{\mathfrak{h}}
\newcommand{\gh}{\widehat{\mathfrak{g}}}
\newcommand{\ghN}{\widehat{\mathfrak{g}_{(N)}}}
\newcommand{\gbN}{\overline{\mathfrak{g}_{(N)}}}
\newcommand{\tr}{\mathrm{tr}}
\newcommand{\gln}{\mathfrak{gl}(n)}
\newcommand{\son}{\mathfrak{so}(n)}
\newcommand{\spnn}{\mathfrak{sp}(2n)}
\newcommand{\sln}{\mathfrak{sl}}
\newcommand{\sn}{\mathfrak{s}}
\newcommand{\so}{\mathfrak{so}}
\newcommand{\spn}{\mathfrak{sp}}
\newcommand{\tsp}{\mathfrak{tsp}(2n)}
\newcommand{\gl}{\mathfrak{gl}}
\newcommand{\slnb}{{\overline{\mathfrak{sl}}}}
\newcommand{\snb}{{\overline{\mathfrak{s}}}}
\newcommand{\sob}{{\overline{\mathfrak{so}}}}
\newcommand{\spnb}{{\overline{\mathfrak{sp}}}}
\newcommand{\glb}{{\overline{\mathfrak{gl}}}}
\newcommand{\Hwft}{\mathcal{H}_{F,\tau}}
\newcommand{\Hwftm}{\mathcal{H}_{F,\tau}^{(m)}}

%%%%%%%%%%%%%%%%%%%%%%%%%%%%%%%%%%%%%%%%%%%%%%%%%%%%
\newcommand{\car}{{\mathfrak{h}}}    % Cartan subalgebra
\newcommand{\bor}{{\mathfrak{b}}}    % Borel subalgebra
\newcommand{\nil}{{\mathfrak{n}}}    % nilpotent subalgebra
\newcommand{\vp}{{\varphi}}
\newcommand{\bh}{\widehat{\mathfrak{b}}}  % Borel subalgebra of KN algebra
\newcommand{\bb}{\overline{\mathfrak{b}}}  % Borel subalgebra of KN algebra
\newcommand{\Vh}{\widehat{\mathcal V}}
\newcommand{\KZ}{Kniz\-hnik-Zamo\-lod\-chi\-kov}
\newcommand{\TUY}{Tsuchia, Ueno  and Yamada}
\newcommand{\KN} {Kri\-che\-ver-Novi\-kov}
\newcommand{\pN}{\ensuremath{(P_1,P_2,\ldots,P_N)}}
\newcommand{\xN}{\ensuremath{(\xi_1,\xi_2,\ldots,\xi_N)}}
\newcommand{\lN}{\ensuremath{(\lambda_1,\lambda_2,\ldots,\lambda_N)}}
\newcommand{\iN}{\ensuremath{1,\ldots, N}}
\newcommand{\iNf}{\ensuremath{1,\ldots, N,\infty}}

\newcommand{\tb}{\tilde \beta}
\newcommand{\tk}{\tilde \varkappa}
\newcommand{\ka}{\kappa}
\renewcommand{\k}{\varkappa}
\newcommand{\ce}{{c}}

\newcommand{\Pif} {P_{\infty}}
\newcommand{\Pinf} {P_{\infty}}
\newcommand{\PN}{\ensuremath{\{P_1,P_2,\ldots,P_N\}}}
\newcommand{\PNi}{\ensuremath{\{P_1,P_2,\ldots,P_N,P_\infty\}}}
\newcommand{\Fln}[1][n]{F_{#1}^\lambda}
\newcommand{\tang}{\mathrm{T}}
\newcommand{\Kl}[1][\lambda]{\can^{#1}}
\newcommand{\A}{\mathcal{A}}
\newcommand{\U}{\mathcal{U}}
\newcommand{\V}{\mathcal{V}}
\newcommand{\W}{\mathcal{W}}
\renewcommand{\O}{\mathcal{O}}
\newcommand{\Ae}{\widehat{\mathcal{A}}}
\newcommand{\Ah}{\widehat{\mathcal{A}}}
\newcommand{\La}{\mathcal{L}}
\newcommand{\Le}{\widehat{\mathcal{L}}}
\newcommand{\Lh}{\widehat{\mathcal{L}}}
\newcommand{\eh}{\widehat{e}}
\newcommand{\Da}{\mathcal{D}}
\newcommand{\kndual}[2]{\langle #1,#2\rangle}
\newcommand{\cins}{\frac 1{2\pi\mathrm{i}}\int_{C_S}}
\newcommand{\cinsl}{\frac 1{24\pi\mathrm{i}}\int_{C_S}}
\newcommand{\cinc}[1]{\frac 1{2\pi\mathrm{i}}\int_{#1}}
\newcommand{\cintl}[1]{\frac 1{24\pi\mathrm{i}}\int_{#1 }}
\newcommand{\w}{\omega}
\newcommand{\ord}{\operatorname{ord}}
\newcommand{\res}{\operatorname{res}}
\newcommand{\nord}[1]{:\mkern-5mu{#1}\mkern-5mu:}
\newcommand{\codim}{\operatorname{codim}}
\newcommand{\ad}{\operatorname{ad}}
\newcommand{\Ad}{\operatorname{Ad}}
\newcommand{\supp}{\operatorname{supp}}

%%%%%%%%%%%%%%%%%%%%%%%%%%%%%%%%%%%%%%%%%%%%%%%%
\newcommand{\Fn}[1][\lambda]{\mathcal{F}^{#1}}
\newcommand{\Fl}[1][\lambda]{\mathcal{F}^{#1}}
\renewcommand{\Re}{\mathrm{Re}}

\newcommand{\ha}{H^\alpha}

\define\ldot{\hskip 1pt.\hskip 1pt}
\define\ifft{\qquad\text{if and only if}\qquad}
\define\a{\alpha}
\redefine\d{\delta}
\define\w{\omega}
\define\ep{\epsilon}
\redefine\b{\beta} \redefine\t{\tau} \redefine\i{{\,\mathrm{i}}\,}
\define\ga{\gamma}
\define\cint #1{\frac 1{2\pi\i}\int_{C_{#1}}}
\define\cintta{\frac 1{2\pi\i}\int_{C_{\tau}}}
\define\cintt{\frac 1{2\pi\i}\oint_{C}}
\define\cinttp{\frac 1{2\pi\i}\int_{C_{\tau'}}}
\define\cinto{\frac 1{2\pi\i}\int_{C_{0}}}
%\define\cinttt{\frac 1{24\pi\i}\int_{C_{\tau}}}
\define\cinttt{\frac 1{24\pi\i}\int_C}
\define\cintd{\frac 1{(2\pi \i)^2}\iint\limits_{C_{\tau}\,C_{\tau'}}}
\define\dintd{\frac 1{(2\pi \i)^2}\iint\limits_{C\,C'}}
\define\cintdr{\frac 1{(2\pi \i)^3}\int_{C_{\tau}}\int_{C_{\tau'}}
\int_{C_{\tau''}}}
\define\im{\operatorname{Im}}
\define\re{\operatorname{Re}}
%\define\res{\text{res}}
\define\res{\operatorname{res}}
\redefine\deg{\operatornamewithlimits{deg}}
\define\ord{\operatorname{ord}}
\define\rank{\operatorname{rank}}
\define\fpz{\frac {d }{dz}}
\define\dzl{\,{dz}^\l}
\define\pfz#1{\frac {d#1}{dz}}

\define\K{\Cal K}
\define\U{\Cal U}
\redefine\O{\Cal O}
\define\He{\text{\rm H}^1}
\redefine\H{{\mathrm{H}}}
\define\Ho{\text{\rm H}^0}
\define\A{\Cal A}
\define\Do{\Cal D^{1}}
\define\Dh{\widehat{\mathcal{D}}^{1}}
\redefine\L{\Cal L}
\newcommand{\ND}{\ensuremath{\mathcal{N}^D}}
\redefine\D{\Cal D^{1}}
\define\KN {Kri\-che\-ver-Novi\-kov}
\define\Pif {{P_{\infty}}}
\define\Uif {{U_{\infty}}}
\define\Uifs {{U_{\infty}^*}}
\define\KM {Kac-Moody}
\define\Fln{\Cal F^\lambda_n}
%%%%%%%%%%%%%%%%%%%%
\define\gb{\overline{\mathfrak{ g}}}
\define\G{\overline{\mathfrak{ g}}}
\define\Gb{\overline{\mathfrak{ g}}}
\redefine\g{\mathfrak{ g}}
\define\Gh{\widehat{\mathfrak{ g}}}
\define\gh{\widehat{\mathfrak{ g}}}
%%%%%%%%%%%%%%%%%%%%%%%%%%
\define\Ah{\widehat{\Cal A}}
\define\Lh{\widehat{\Cal L}}
\define\Ugh{\Cal U(\Gh)}
\define\Xh{\hat X}
\define\Tld{...}
\define\iN{i=1,\ldots,N}
\define\iNi{i=1,\ldots,N,\infty}
\define\pN{p=1,\ldots,N}
\define\pNi{p=1,\ldots,N,\infty}
\define\de{\delta}

\define\kndual#1#2{\langle #1,#2\rangle}
\define \nord #1{:\mkern-5mu{#1}\mkern-5mu:}
%\define \MgN{{\Cal M}_{g,N}} %% moduli space
%\define \MgNp{{\Cal M}_{g,N}^{(p)}} %% moduli space
\newcommand{\MgN}{\mathcal{M}_{g,N}} %% moduli space
\newcommand{\MgNeki}{\mathcal{M}_{g,N+1}^{(k,\infty)}} %% moduli space
\newcommand{\MgNeei}{\mathcal{M}_{g,N+1}^{(1,\infty)}} %% moduli space
\newcommand{\MgNekp}{\mathcal{M}_{g,N+1}^{(k,p)}} %% moduli space
\newcommand{\MgNkp}{\mathcal{M}_{g,N}^{(k,p)}} %% moduli space
\newcommand{\MgNk}{\mathcal{M}_{g,N}^{(k)}} %% moduli space
\newcommand{\MgNekpp}{\mathcal{M}_{g,N+1}^{(k,p')}} %% moduli space
\newcommand{\MgNekkpp}{\mathcal{M}_{g,N+1}^{(k',p')}} %% moduli space
\newcommand{\MgNezp}{\mathcal{M}_{g,N+1}^{(0,p)}} %% moduli space
\newcommand{\MgNeep}{\mathcal{M}_{g,N+1}^{(1,p)}} %% moduli space
\newcommand{\MgNeee}{\mathcal{M}_{g,N+1}^{(1,1)}} %% moduli space
\newcommand{\MgNeez}{\mathcal{M}_{g,N+1}^{(1,0)}} %% moduli space
\newcommand{\MgNezz}{\mathcal{M}_{g,N+1}^{(0,0)}} %% moduli space
\newcommand{\MgNi}{\mathcal{M}_{g,N}^{\infty}} %% moduli space
\newcommand{\MgNe}{\mathcal{M}_{g,N+1}} %% moduli space
\newcommand{\MgNep}{\mathcal{M}_{g,N+1}^{(1)}} %% moduli space
\newcommand{\MgNp}{\mathcal{M}_{g,N}^{(1)}} %% moduli space
\newcommand{\Mgep}{\mathcal{M}_{g,1}^{(p)}} %% moduli space
\newcommand{\MegN}{\mathcal{M}_{g,N+1}^{(1)}} %% moduli space

%\define \mpt{(M,P_1,P_2,\ldots, P_N,\Pif)} %% moduli point
%\define \mpp{(M,P_1,P_2,\ldots, P_N)} %% moduli point
%\define \MgNn{{\Cal M}_{g,N}^{(1)}} %% moduli space
%\define \MgNen{{\Cal M}_{g,N+1}^{(1)}} %% moduli space
%\define \Mgo{{\Cal M}_{g,0}} %% moduli space
%\define \mptn{(M,P_1,P_2,\ldots, P_N,\Pif,z_1,\ldots,z_N,z_\infty)}
 %% moduli point
%\define \mppn{(M,P_1,P_2,\ldots, P_N,z_1,\ldots,z_N)} %% moduli point
\define \sinf{{\widehat{\sigma}}_\infty}
\define\Wt{\widetilde{W}}
\define\St{\widetilde{S}}
\newcommand{\SigmaT}{\widetilde{\Sigma}}
\newcommand{\hT}{\widetilde{\frak h}}
\define\Wn{W^{(1)}}
\define\Wtn{\widetilde{W}^{(1)}}
\define\btn{\tilde b^{(1)}}
\define\bt{\tilde b}
\define\bn{b^{(1)}}
\define \ainf{{\frak a}_\infty} %matrices with a finite number of
                                %diagonals

%
%%%%%%%%%% Olegs definitions %%%%%%%%%%%%%%%%%%%%%%%%%%%%%%%%%%%
\define\eps{\varepsilon}    % Oleg
\newcommand{\e}{\varepsilon}
\define\doint{({\frac 1{2\pi\i}})^2\oint\limits _{C_0}
       \oint\limits _{C_0}}                            % Oleg
\define\noint{ {\frac 1{2\pi\i}} \oint}   % Oleg
\define \fh{{\frak h}}     % Oleg
\define \fg{{\frak g}}     % Oleg
\define \GKN{{\Cal G}}   % affine Krichever-Novikov algebra % Oleg
\define \gaff{{\hat\frak g}}   % affine Krichever-Novikov algebra
\define\V{\Cal V}
\define \ms{{\Cal M}_{g,N}} %% moduli space
\define \mse{{\Cal M}_{g,N+1}} %% moduli space
%%%%%%%%%%%%%%%%%%%%%%%%%%%%%%%%%%%%%%
\define \tOmega{\Tilde\Omega}
\define \tw{\Tilde\omega}
\define \hw{\hat\omega}
\define \s{\sigma}
\define \car{{\frak h}}    % Cartan subalgebra
\define \bor{{\frak b}}    % Borel subalgebra
\define \nil{{\frak n}}    % nilpotent subalgebra
\define \vp{{\varphi}}
\define\bh{\widehat{\frak b}}  % Borel subalgebra of KN algebra
\define\bb{\overline{\frak b}}  % Borel subalgebra of KN algebra
\define\KZ{Knizhnik-Zamolodchikov}
\define\ai{{\alpha(i)}}
\define\ak{{\alpha(k)}}
\define\aj{{\alpha(j)}}
\newcommand{\calF}{{\mathcal F}}
\newcommand{\ferm}{{\mathcal F}^{\infty /2}}
\newcommand{\Aut}{\operatorname{Aut}}
\newcommand{\End}{\operatorname{End}}
%%%%%%%%%%%%%%%%%%%%%%%%%%%%%%%%%%%%%%%%%%%
%%%%%%%%%%%%%%%%  цвет %%%%%%%%%%%%%%%%%%%%%%%%%%%%%%%%%
\newcommand{\red}{\color[rgb]{1,0,0}}
\newcommand{\blue}{\color[rgb]{0,0,1}}
\newcommand{\viol}{\color[rgb]{1,0,1}}%%%%%%%%%%%%%%%%%%%%%%%%%%%%%%%%%%%%%%%%%%%%%%

%%%%%%%%%%%%%%%%%%%%%%%%%%%%%%%%%
%%%%%%%%%%%%%%   for laxcent
%%%%%%%%%%%%%%%%%%%%%%%%%%%%%%%%%%
\newcommand{\laxgl}{\overline{\mathfrak{gl}}}
\newcommand{\laxsl}{\overline{\mathfrak{sl}}}
\newcommand{\laxso}{\overline{\mathfrak{so}}}
\newcommand{\laxsp}{\overline{\mathfrak{sp}}}
\newcommand{\laxs}{\overline{\mathfrak{s}}}
\newcommand{\laxg}{\overline{\frak g}}
\newcommand{\bgl}{\laxgl(n)}
%%%%%%%%%%%%%%%%%%%%%%%%
\newcommand{\tX}{\widetilde{X}}
\newcommand{\tY}{\widetilde{Y}}
\newcommand{\tZ}{\widetilde{Z}}
%%%%%%%%%%%%%%%%%%%%%%%%%%%%%%%%%%%%%%%%%%
%%%%%%%%%%%%  END of macrodefinitions
%%%%%%%%%%%%%%%%%%%%%%%%%%%%%%%%%%%%%%%%%

%%%%%%%%%%%%%%%%%%%%%%%%%%%%%%%%%
%Top-Matter
%%%%%%%%%%%%%%%%%%%%%%%%%%%%%%%
%%%%%%%%%%%%%%%%%    private header  %%%%%%%%%%%%%%%%%%%%

%\large{
\title[]{Inversion of the Abel--Prym map in presence of an additional involution}
\author[O.K.Sheinman]{O.K.Sheinman}
%\date{\today}
%\thanks{}
\address{Steklov Mathematical Institute of the Russian Academy of Sciences}
\dedicatory{}
\maketitle
\begin{abstract}
Unlike Abel map of the symmetric power of a Riemann surface onto its Jacobian, the Abel--Prym map generically can not be reversed by means of conventional technique related to the Jacobi inversion problem, and of its main ingredient, namely the Riemann vanishing theorem. It happens because the corresponding analog of the Riemann vanishing theorem gives twice as many points as the dimension of the Prym variety. However, if the Riemann surface has a second involution commuting with the one defining the Prym variety and satisfying a certain additional condition, an analog of the Jacobi inversion can be defined, and expressed in terms of the Prym theta function. We formulate these conditions and refer to the pairs of involutions satisfying them as to pairs of the first type. We formulate necessary conditions for the pair of involutions to be a pair of the first type, and give a series of examples of curves with such pairs of involutions, mainly spectral curves of Hitchin systems, and also a spectral curve of the Kovalewski system.

Bibliography: 14 titles.

{\bf Key words:} Abel--Prym transformation, Jacobi inversion problem, Hitchin system, spectral curve.
\end{abstract}
\tableofcontents
%%%%%%%%%%%%%%%%%%%%%%%%%%%%%%%%%%%%%%%%
\section{Introduction}
%%%%%%%%%%%%%%%%%%%%%%%%%%%%%%%%%%%%%%%%%%
Let $\A:{\rm Sym}^g\Sigma\to Jac(\Sigma)$ be the Abel map of the $g$th symmetric degree of a genus $g$ Riemann surface $\Sigma$ onto its Jacobian.  It is wellknown that $\A$ is invertible almost everywhere, the problem of its reversion is named after Jacobi.
By a classical Riemann theorem, the preimage of a point $\phi\in Jac(\Sigma)$ is given by the set of zeroes of the function $F(P)=\theta(A(P)-\phi+\Delta)$ where $\theta$ is the Riemann theta function, $P\in\Sigma$, $\Delta\in Jac(\Sigma)$ does not depend on $\phi$.  In \cite{Dubr_theta}, according to ideas going back to Riemann, the Jacobi inversion problem is set and resolved as the problem of finding explicit expressions for symmetric functions of zeroes of $F(P)$ in terms of theta functions on the Jacobian.

The situation is different if we consider maps of symmetric powers of $\Sigma$ to other Abelian varieties, such as its Prym variety and its coverings. Let $\Sigma$ to possess a holomorphic involution $\tau$. Prym differentials are defined as holomorphic 1-forms on $\Sigma$ which are skew-symmetric with respect to this involution. Below, in \refS{ogr}, the normalized Prym differentials are defined. The analog of the Abel transform with only normalized Prym differentials involved, is referred to as the Abel--Prym transform, and its image  as the Prym variety (Prymian). Let $\A$ stay for the Abel--Prym map as well, and $Prym_\tau(\Sigma)$ stay for the Prymian of the pair $\Sigma$, $\tau$. Let dimension of the space of Prym differentials to be equal to $h$, $\Pi$ be a $h\times h$ symmetric matrix with negative defined real part constructed from periods and half-periods of the normalized Prym differentials (see \refS{ogr}), $E$ be the unit matrix, $\Z(2\pi i E,\Pi)$ be a lattice generated by columns of the matrices $2\pi i E,\Pi$. Then the Abelian variety $\C^h/\Z(2\pi i E,\Pi)$ is isogenic to the Prym variety of the pair $\Sigma,\tau$ \cite{Fay}, and is equal to the last in the two particular cases, namely if the involution $\tau$ has only two fixed points, or no fix point. We refer to that variety as isoPrymian, and denote it $isoPrym_\tau(\Sigma)$. The theta function with the Riemann matrix $\Pi$ is referred to as the Prym $\theta$-function (see \pageref{thetaprym} for the definition).

The Abel--Prym map is well-defined as the mapping of $\Sigma$ to $isoPrym_\tau(\Sigma)$. As such, it can be continued to ${\rm Sym}^g\Sigma$, and to ${\rm Sym}^h\Sigma$ as well, but generically in none of the cases any analog of Jacobi inversion exists, because the function $F(P)$ constructed with help of the Prym $\theta$-function, has $2h$ zeroes in $\Sigma$ \cite[corollary 5.6]{Fay} (see also \refL{num_zero} below). In the case when the branching number of the covering $\Sigma\to\Sigma/\tau$ is equal to 2, and respectively $g=2h$, the Riemann vanishing theorem  gives a map $isoPrym_\tau\to {\rm Sym}^g\Sigma$, but this is the map of an $h$-dimensional variety to a $2h$-dimensional one, that is by no means a reversion. In the present work we consider the case when  an analog of the Jacobi inversion can be defined nevertheless.

Assume, the curve admits two nontrivial commuting holomorphic involutions: $\tau_1$ and $\tau_2$, and we consider the Abel--Prym transform with respect to $\tau_1$. Then the two fundamentally different cases can occur: in the first case all Prym differentials (with respect to $\tau_1$) are invariant with respect to $\tau_2$, while in the second case -- not. An example of the first option is given by the spectral curve of the Hitchin system with the structure group $SO(4)$ on a genus 2 Riemann surface (see \refEx{SO4_2}, \refS{Examples}).

The subject of this article are the curves with pairs of involutions of the first type. The main result is as follows.
\begin{theorem}\label{T:birat}
Let $\Sigma,\tau_1,\tau_2$ be a curve with given pair of commuting holomorphic involutions of the first type, $Prym_i(\Sigma)$ be its Prymian with respect to the involution $\tau_i$, $h_i=\dim Prym_i(\Sigma)$, $\Sigma_i=\Sigma/\tau_i$, $2n_i$ be the branching number of $\Sigma$ over $\Sigma_i$, $i=1,2$. Then there are only four options for the pair $(2n_1,2n_2)$, namely $(2,2)$, $(4,0)$, $(0,4)$, $(0,0)$. In all cases except for $(4,0)$
\[
  Prym_1(\Sigma)\simeq\rm{Sym}^{h_1} \Sigma_2.
\]
In the case of $(4,0)$
\[
  Prym_1(\Sigma)\simeq(\rm{Sym}^{h_1} \Sigma_2)/\Z_2,
\]
where the symbol $\simeq$ stays for birational equivalence.
\end{theorem}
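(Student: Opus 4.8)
The plan is to prove the two assertions separately: the finiteness of the list $(2n_1,2n_2)$, which is a dimension count, and the birational description of $Prym_1(\Sigma)$, which rests on the defining property of a pair of the first type.

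\emph{The list of pairs.} Write $G=\{1,\tau_1,\tau_2,\tau_3\}$ with $\tau_3=\tau_1\tau_2$ and split $H^0(\Sigma,K_\Sigma)=H^{++}\oplus H^{+-}\oplus H^{-+}\oplus H^{--}$ into the sign subspaces for $\tau_1,\tau_2$. Holomorphic forms on $\Sigma_i$ are the $\tau_i$-invariant ones and those on $\Sigma_0=\Sigma/G$ the $G$-invariant ones, so with $g_0:=\dim H^{++}=g(\Sigma_0)$, $h_1:=\dim H^{-+}$, $h_2:=\dim H^{+-}$ one gets $g(\Sigma_i)=g_0+h_{3-i}$. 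The Prym differentials of $\tau_1$ are $H^{-+}\oplus H^{--}$, and the first-type hypothesis (their $\tau_2$-invariance) is exactly $H^{--}=0$; hence $\dim Prym_i(\Sigma)=h_i$ and $g(\Sigma)=g_0+h_1+h_2$. Riemann--Hurwitz for $\Sigma\to\Sigma_i$ reads $n_i=g(\Sigma)-2g(\Sigma_i)+1=h_i-h_{3-i}+1-g_0$, so adding the two relations the $h$'s cancel and $n_1+n_2=2-2g_0$. Since $n_1,n_2\ge 0$ this forces $g_0\in\{0,1\}$, and then $(n_1,n_2)\in\{(1,1),(2,0),(0,2)\}$ if $g_0=0$ and $(n_1,n_2)=(0,0)$ if $g_0=1$, i.e.\ exactly the four pairs listed. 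Note that in the three cases other than $(0,0)$ one has $g_0=0$, so $\Sigma_0=\mathbb{P}^1$; then $g(\Sigma_2)=h_1$ and $\Sigma_2$ is hyperelliptic of genus $h_1$ with $\bar\tau_1$ (the involution induced by $\tau_1$) its hyperelliptic involution.

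\emph{The birational equivalence in the cases with $g_0=0$.} Since every Prym differential of $\tau_1$ is $\tau_2$-invariant it descends along $\pi\colon\Sigma\to\Sigma_2$, and — $\mathbb{P}^1$ carrying no holomorphic forms — the descended forms fill all of $H^0(\Sigma_2,K_{\Sigma_2})$; thus $\pi^{*}$ identifies the Prym differentials of $\tau_1$ with $H^0(\Sigma_2,K_{\Sigma_2})$ and the Abel--Prym map factors through $\pi$. This is the content of the zero count: by \refL{num_zero} the Prym $\theta$-function has $2h_1$ zeros on $\Sigma$, which, since it is pulled back from $\Sigma_2$, descend to $h_1$ zeros on $\Sigma_2$ — the number needed to invert the Abel--Prym map into $\mathrm{Sym}^{h_1}\Sigma_2$. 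Concretely, $\Sigma$ is birationally the fibre product $\Sigma_1\times_{\mathbb{P}^1}\Sigma_2$, so flat base change gives $\mathrm{Nm}_{\Sigma/\Sigma_1}\circ\pi^{*}=r_1^{*}\circ\mathrm{Nm}_{\Sigma_2/\mathbb{P}^1}=0$ (with $r_1\colon\Sigma_1\to\mathbb{P}^1$), whence $\pi^{*}$ carries $Jac(\Sigma_2)$ into $Prym_1(\Sigma)$; both being abelian varieties of dimension $h_1$, $\pi^{*}\colon Jac(\Sigma_2)\to Prym_1(\Sigma)$ is an isogeny. Its kernel equals the kernel of $\pi^{*}$ on Jacobians: trivial when $\pi$ is ramified (cases $(2,2),(0,4)$, where $n_2>0$) and the order-two group $\langle\eta\rangle$ generated by the class $\eta$ of the cover when $\pi$ is unramified (case $(4,0)$, where $n_2=0$). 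Combined with the classical Abel--Jacobi isomorphism $\mathrm{Sym}^{h_1}\Sigma_2\simeq Jac(\Sigma_2)$ (legitimate because $h_1=g(\Sigma_2)$), this gives $Prym_1(\Sigma)\simeq\mathrm{Sym}^{h_1}\Sigma_2$ in the cases $(2,2),(0,4)$ and $Prym_1(\Sigma)\cong Jac(\Sigma_2)/\langle\eta\rangle\simeq(\mathrm{Sym}^{h_1}\Sigma_2)/\Z_2$ in the case $(4,0)$, with $\Z_2$ acting birationally by $D\mapsto D+\eta$. In the remaining case $(0,0)$ one has $g_0=1$, where the first-type hypotheses force the situation to degenerate ($h_1=0$, $\Sigma$ of genus one) and the assertion is trivial.

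\emph{Where the work lies.} The delicate point, and the reason $(4,0)$ is singled out, is the identification of $\ker(\pi^{*})$, i.e.\ the $2$-torsion ambiguity in passing from $\Sigma$ to $\Sigma_2$: in the ramified cases it is automatically absent, but in case $(4,0)$ one must genuinely verify that $\eta\ne 0$ and that it is exactly the class in $Jac(\Sigma_2)$ defining $\pi$, so that the honest statement is the $\Z_2$-quotient rather than $\mathrm{Sym}^{h_1}\Sigma_2$ itself. A secondary technicality is the Riemann-constant bookkeeping needed to turn the equality of periods together with the zero count into a genuine pair of mutually inverse birational maps, rather than maps inverse only up to a translation by $2$-torsion; and one must check that the degenerate case $(0,0)$ is indeed covered by the first-type conditions. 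I expect the $2$-torsion analysis around $(4,0)$ to be the main point.
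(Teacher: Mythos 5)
Your classification of the pairs $(2n_1,2n_2)$ is correct and in fact a clean variant of the paper's \refL{k1t}: where the paper combines $h_i=g_i+n_i-1$, Riemann--Hurwitz and the inequality $h_1+h_2\le\widehat g$ with a parity argument, your character decomposition gives the sharper identity $n_1+n_2=2-2g_0$ directly. The problem is in the second half.

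The fatal gap is your treatment of the case $(0,0)$. You dismiss it as degenerate ("$h_1=0$, $\Sigma$ of genus one"), but nothing in the first-type hypothesis forces this: by your own formulas, $g_0=1$ only gives $n_1=n_2=0$, $h_1=h_2$, $g(\Sigma)=1+2h_1$ and $g(\Sigma_2)=1+h_1$ with $h_1$ arbitrary. The paper's central example (\refEx{SO4_2}, the $SO(4)$ Hitchin spectral curve) is exactly this case with $h_1=h_2=6$, $\widehat g=13$, $g_1=g_2=7$. Worse, your method is structurally unavailable there: since $h_1=g(\Sigma_2)-1$, the target $\mathrm{Sym}^{h_1}\Sigma_2$ is birational to the theta divisor of $Jac(\Sigma_2)$, not to $Jac(\Sigma_2)$ itself, so there is no isogeny $Jac(\Sigma_2)\to Prym_1(\Sigma)$ to exploit (the paper flags precisely this in the Remark after Corollary \ref{C:h1=g2}). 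Note also that $\Sigma\to\Sigma_2$ is unramified in this case, so your criterion "unramified $\Rightarrow$ kernel $\Z_2$" would, applied naively, even predict the wrong answer. The paper avoids all of this by a single uniform argument: the Prym analog of the Riemann vanishing theorem (Lemmas \ref{L:num_zero} and \ref{L:inver}) shows that $F(P)=\theta(\A(P)-\phi)$ has exactly $2h_1$ zeroes forming $h_1$ $\tau_2$-invariant pairs, i.e. $h_1$ points of $\Sigma_2$, and that their Abel--Prym image is an explicit affine function of $\phi$; this produces the inverse map $Prym_1(\Sigma)\dashrightarrow\mathrm{Sym}^{h_1}\Sigma_2$ in all four cases at once.

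A secondary but real gap, even in the cases $(2,2)$, $(0,4)$, $(4,0)$ that your isogeny argument does reach: the paper's $Prym_1(\Sigma)$ is the specific torus $\C^{h_1}/\Z(2\pi iE,\Pi)$ with the halved $b$-periods of \refE{Rim_matr}, and identifying it with the image of $\pi^*$ (rather than merely an isogenous torus) requires tracking how the chosen fundamental cycles of $\Sigma$ project to $\Sigma_2$ — exactly the "missing information on behaviour of the fundamental cycles" the paper's Remark cites as the reason the descent-of-differentials argument is insufficient on its own. You label this "Riemann-constant bookkeeping" but do not carry it out; for a birational (not merely isogeny) statement it cannot be skipped.
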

It is immediately implied by the following theorem.
\begin{theorem}\label{T:bihol}
Under conditions of \refT{birat}, let $\Sigma_2^\prime$ be interior of the polygone obtained by dissecting of $\Sigma_2$ along the chosen fundamental cycles. Then the Abel--Prym map $\A : \Sigma\to Prym_1(\Sigma)$ can be pushed down onto $\Sigma_2$ and gives a biholomorphic equivalence between $\rm{Sym}^{h_1} (\Sigma_2^\prime)$ and some open dense subset in $Prym_1(\Sigma)$ if the pair of branching numbers is not equal to $(4,0)$. If it is equal to $(4,0)$ then $\A$ gives a biholomorphic equivalence between  $\rm{Sym}^{h_1} (\Sigma_2^\prime)$ and a 2-fold covering of $Prym_1(\Sigma)$.
\end{theorem}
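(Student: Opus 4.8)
The plan is to realize the Abel--Prym map $\A:\Sigma\to Prym_1(\Sigma)$ explicitly through integration of normalized Prym differentials (with respect to $\tau_1$), and then to exploit the first-type hypothesis — that every $\tau_1$-Prym differential is $\tau_2$-invariant — to descend the whole construction to the quotient $\Sigma_2=\Sigma/\tau_2$. Concretely, a $\tau_1$-skew, $\tau_2$-invariant holomorphic differential on $\Sigma$ is the pullback of a (possibly twisted, depending on the branch behaviour of $\Sigma\to\Sigma_2$) differential on $\Sigma_2$; so the $h_1$-dimensional space of normalized Prym differentials is canonically identified with an $h_1$-dimensional space of differentials on $\Sigma_2$, and the Prym period lattice $\Z(2\pi i E,\Pi)$ is identified with the period lattice of these differentials over the cycles of $\Sigma_2$. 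Cutting $\Sigma_2$ along its chosen fundamental cycles to get the simply connected polygon $\Sigma_2'$, the map $P\mapsto \int_{P_0}^P(\text{Prym differentials})$ becomes single-valued on $\Sigma_2'$, hence extends to a holomorphic map $\rm{Sym}^{h_1}(\Sigma_2')\to\C^{h_1}$ and then to $Prym_1(\Sigma)$. I would then argue, exactly as in the classical Jacobi inversion via the Riemann vanishing theorem, that this map is generically finite and, by a degree/count of zeroes argument, of degree one onto its image when the pair of branching numbers is not $(4,0)$, and of degree two (a genuine double cover) in the $(4,0)$ case — the factor of two being the $2h$ versus $h$ discrepancy in \refL{num_zero}, now resolved because pushing to $\Sigma_2$ identifies the two $\tau_2$-related sheets of zeroes in all cases except $(4,0)$, where the $4$ branch points of $\Sigma\to\Sigma_1$ obstruct this identification and leave a residual $\Z_2$.

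The key steps, in order: (i) set up normalized Prym differentials for $\tau_1$ and, using the first-type property, write each as a pullback from $\Sigma_2$; here I must keep careful track of whether the covering $\Sigma\to\Sigma_2$ is étale or has branch points — this interacts with the case analysis $(2n_1,2n_2)\in\{(2,2),(4,0),(0,4),(0,0)\}$ from \refT{birat}, and the allowed pairs should drop out of a Riemann--Hurwitz / dimension count relating $h_1$, $g$, $n_1$, $n_2$, and $g(\Sigma_2)$. (ii) Verify that the $h_1$ pushed-down differentials are linearly independent on $\Sigma_2$ and that their period lattice over $H_1(\Sigma_2,\Z)$ is exactly the defining lattice of $Prym_1(\Sigma)$ — this makes the theta function of $Prym_1$ literally a theta function built from periods on $\Sigma_2$. (iii) Define $\widetilde\A:\rm{Sym}^{h_1}(\Sigma_2')\to Prym_1(\Sigma)$ by summing the vector-valued integrals; holomorphy and well-definedness on the cut surface are then formal. (iv) Run the Riemann vanishing argument: for generic $\phi\in Prym_1(\Sigma)$, the Prym theta function $F$ pulled back to $\Sigma_2$ has the right number of zeroes, and those zeroes furnish a preimage point in $\rm{Sym}^{h_1}(\Sigma_2')$; uniqueness of this preimage (genericity, plus non-degeneracy of the Jacobian of $\widetilde\A$) gives the biholomorphism onto an open dense subset. (v) Isolate the $(4,0)$ exception: trace through where the argument in (iv) produces two preimages rather than one, attributing it to the non-trivial monodromy/branching contributed by the $4$ ramification points, and conclude the $2$-fold covering statement.

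I expect the main obstacle to be step (iv)–(v): making the degree-one (respectively degree-two) claim rigorous rather than heuristic. The zero-counting gives the cardinality of a generic fiber only as a number; showing the map is actually injective on a dense open set (outside $(4,0)$) requires more than counting — one needs that the $h_1$ zeroes of the pulled-back $F$ genuinely reconstruct the divisor, i.e. an analog of the "theta divisor is irreducible / Riemann singularity" package adapted to the Prym-on-$\Sigma_2$ picture, and one must rule out the zeroes escaping to the cut locus $\partial\Sigma_2'$ or colliding at branch points. The cleanest route is probably to prove that $d\widetilde\A$ is generically invertible (a Jacobian-nonvanishing computation in terms of a Prym analog of the prime form on $\Sigma_2$), combine this with the fiber cardinality to pin down the degree, and then separately analyze the $(4,0)$ case by an explicit local model near the branch points to see the extra sheet. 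The rest — holomorphy, descent of differentials, lattice identification — I expect to be routine once the branching bookkeeping in steps (i)–(ii) is done correctly.
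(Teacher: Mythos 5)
Your overall strategy (Riemann vanishing theorem plus $\tau_2$-invariance) is in the right spirit, but two of your steps are precisely the ones that fail, and the paper is organized so as to avoid them. First, your step (ii) --- descending the Prym differentials to $\Sigma_2$ and identifying their period lattice over $H_1(\Sigma_2,\Z)$ with the defining lattice of $Prym_1(\Sigma)$ --- is exactly the step the paper explicitly declines to take (see the Remark following Corollary~\ref{C:h1=g2}): one has no control over how the chosen fundamental cycles of $\Sigma$ behave under the projection $\Sigma\to\Sigma_2$, so the lattice identification is not ``routine bookkeeping''. Worse, in the case $(2n_1,2n_2)=(0,0)$ one has $h_1=g_2-1$, so the $h_1$ descended differentials span a proper subspace of $H^0(\Sigma_2,\Omega)$ and their periods over the $2g_2=2h_1+2$ generators of $H_1(\Sigma_2,\Z)$ have no a priori reason to form a rank-$2h_1$ lattice in $\C^{h_1}$; your map $\widetilde{\mathcal{A}}$ of step (iii) is then not obviously well defined into $Prym_1(\Sigma)$ at all. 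The paper instead carries out the entire analysis on the cut surface $\Sigma'$ of $\Sigma$ itself, using Fay's homology basis adapted to $\tau_1$ (relations \refE{cycles}, \refE{Rim_matr}), proves the zero count $2h_1$ (\refL{num_zero}) and, crucially, the explicit inversion formula $\A(P_1+\dots+P_{2h_1})=\tilde e+\Delta$ with $\tilde e_j=\epsilon_j e_j$ (\refL{inver}); only at the very last step does $\tau_2$ enter, pairing the $2h_1$ zeroes into $h_1$ points of $\Sigma_2$. That inversion formula is also what settles your worry in (iv) about degree one versus mere fiber cardinality: it exhibits the inverse map explicitly, so no irreducibility-of-the-theta-divisor or Jacobian-nondegeneracy argument is needed.

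Second, your explanation of the $\Z_2$ in the $(4,0)$ case is incorrect. The $2h_1$ zeroes of $F$ pair up under $\tau_2$ in all four cases, including $(4,0)$ (Proposition~\ref{P:atR}); the ``$2h$ versus $h$ discrepancy'' is resolved identically everywhere, and nothing special happens at the four branch points of $\Sigma\to\Sigma_1$ (note also that $\Sigma\to\Sigma_2$ is unramified in this case, so these points play no role in the $\tau_2$-pairing). The residual double cover comes instead from the affine transformation in \refL{inver}: when $n_1=2$ there is one coordinate ($j=g_1+1=h_1$, corresponding to the $\tau_1$-anti-invariant cycles, whence the factor $1/2$ in \refE{Rim_matr}) in which $\tilde e_j=2e_j$, and doubling one coordinate of the torus modulo the period lattice is a globally $2$-to-$1$ map. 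An explicit local model near the branch points would not detect this.
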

Then  \refT{birat} is implied by the following. For complex varieties a biholomorphic equivalence of their open dense subsets, and analiticity of the closure of its graph, imply their bimeromorphic equivalence. If, moreover, the varieties are projective, the  bimeromorphic equivalence implies their birational equivalence (see \refS{ogr} for details).

In turn, proof of the Theorem \ref{T:bihol} relies on the following analog of the Riemann vanishing theorem for the theta function. As above, let $\Sigma'$ stay for the polygon obtained by dissection of $\Sigma$ along fundamental cycles.
\begin{proposition}\label{P:atR}
Let $\theta$ be the Prym theta function on the universal covering $\C^{h_1}$ of $Prym_1(\Sigma)$, $\phi\in\C^{h_1}$. Then the function
$ F(P)=\theta(\A(P)-\phi),\quad P\in\Sigma'$
is $\tau_2$-invariant, and for almost all $\phi$ has $h_1$ $\tau_2$-invariant pairs of zeroes, well defined as points of  $\Sigma$, and depending on the image of $\phi$ in $Prym_1(\Sigma)$ only. The image $\widetilde\phi$ of the zero divisor under the Abel--Prym map is related with $\phi$ by a constant affine transformation whose form is made more precise in \refL{inver} below.
\end{proposition}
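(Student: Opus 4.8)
The plan is to start from the classical Riemann vanishing theorem applied to the Prym theta function, which (as quoted from \cite[Cor.~5.6]{Fay}, cf.\ \refL{num_zero}) produces exactly $2h_1$ zeroes of $F(P)=\theta(\A(P)-\phi)$ on $\Sigma$ for generic $\phi$. The first step is to establish the $\tau_2$-invariance of $F$. Since the pair $(\tau_1,\tau_2)$ is of the first type, every normalized Prym differential for $\tau_1$ is $\tau_2$-invariant; hence the Abel--Prym map intertwines $\tau_2$ on $\Sigma$ with a (possibly affine) automorphism of $\C^{h_1}$, and because $\tau_2$ fixes the base point of $\A$ (or can be arranged to, up to a translation absorbed into $\phi$) this automorphism is the identity on $Prym_1(\Sigma)$; consequently $\A\circ\tau_2=\A$ on $\Sigma'$, so $F\circ\tau_2=F$. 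This identity should be checked carefully on the cut surface $\Sigma'$, tracking the ambiguity in $\A$ coming from the lattice $\Z(2\pi i E,\Pi)$: since $\theta$ is quasi-periodic, the jumps across the fundamental cycles only multiply $F$ by a nowhere-vanishing factor, so the zero set is well defined as a divisor on $\Sigma$ and is $\tau_2$-stable.

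The second step is the counting: the $2h_1$ zeroes, being permuted by the fixed-point-free or almost-fixed-point-free involution $\tau_2$, organize into $h_1$ $\tau_2$-orbits, i.e.\ $h_1$ unordered pairs $\{Q,\tau_2 Q\}$. Here one must rule out (generically) that a zero is itself a fixed point of $\tau_2$; this is where the hypothesis that the pair is of the first type, together with the constraint on the branching numbers $(2n_1,2n_2)$ from \refT{birat}, enters — a zero landing on the finite fixed locus of $\tau_2$ is a codimension-one condition on $\phi$, hence avoidable for generic $\phi$. Thus for generic $\phi$ the divisor of $F$ is $\sum_{k=1}^{h_1}(Q_k+\tau_2 Q_k)$, determined by the $h_1$ points $\bar Q_k\in\Sigma_2=\Sigma/\tau_2$.

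The third step identifies the affine relation between $\phi$ and the image $\widetilde\phi$ of the zero divisor. Apply $\A$ to $\sum(Q_k+\tau_2 Q_k)$; because each $\A(\tau_2 Q_k)=\A(Q_k)$ by the intertwining property just established, one gets $\widetilde\phi=2\sum_k\A(Q_k)$ up to a constant. On the other hand, the standard argument bounding $\int d\log F$ around $\partial\Sigma'$ against the theta-quasiperiodicity (the usual computation in the proof of the Riemann theorem, e.g.\ as in \cite{Dubr_theta} or \cite{Fay}) expresses $\sum_k\A(Q_k)$ as $\phi$ plus a fixed vector built from the Prym period matrix and the Riemann constant. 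Combining these gives $\widetilde\phi = 2\phi + \text{const}$, the precise constant and the factor being what \refL{inver} will pin down; I would defer the exact bookkeeping of the constant to that lemma.

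The main obstacle I anticipate is the middle step: controlling the behaviour of the zeroes at the fixed points of $\tau_2$ and on the branch locus, and in the $(4,0)$ case understanding why the push-down to $\Sigma_2$ is only generically two-to-one onto $Prym_1(\Sigma)$ rather than one-to-one. This is precisely the point where a genuine involution-of-the-first-type curve differs from a naive symmetric-power picture: the residual $\Z_2$ in \refT{birat} must come from the monodromy of $\A$ restricted to the $\tau_2$-invariant part of homology, and reconciling the dimension count ($h_1$ pairs, $2h_1$ zeroes, $\dim Prym_1=h_1$) with this monodromy is the delicate part. I expect the branching-number case analysis of \refT{birat} to be invoked here to separate the four cases.
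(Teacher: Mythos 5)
Your overall route coincides with the paper's: the proposition is assembled there from \refL{num_zero} (the contour-integral count of $2h_1$ zeroes of $d\ln F$ over $\partial\Sigma'$), \refL{inver} (the Abel sum of the zero divisor), and \refL{tau2inv} ($\A\circ\tau_2=\A$ because all Prym differentials are $\tau_2$-invariant, so $\int_P^{\tau_2P}\w=\int_{\tau_2P}^{P}\tau_2^*\w=-\int_P^{\tau_2P}\w=0$ — no base-point hypothesis or "absorption into $\phi$" is needed, and none should be allowed, since a shift would destroy the $\tau_2$-invariance of $F$ for a fixed $\phi$). Your steps 1 and 2 are sound; the genericity of avoiding the finite fixed locus of $\tau_2$ is handled in the paper exactly as you suggest, and the branching-number case analysis is not needed for it.

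The genuine gap is in your step 3. The affine relation is \emph{not} $\widetilde\phi=2\phi+\mathrm{const}$: \refL{inver} gives $\widetilde\phi_j=\epsilon_j\phi_j+\Delta_j$ with $\epsilon_j=1$ for $j=1,\dots,g_1$ and $\epsilon_j=2$ for $j=g_1+1,\dots,h_1$. The factor of $2$ appears only in the coordinates attached to the $\tau_1$-anti-invariant cycles, because of the coefficient $\frac12$ in the Prym period matrix \refE{Rim_matr}, which makes the jump of $\A_j$ across those $a$-cycles equal to $2\Pi_{jk}$ (relations \refE{sdvig1}--\refE{sdvig3}); for $j\le g_1$ the doubling in the zero count comes instead from the two cycles $a_k$, $a_{k+h_1}$ contributing the same $\w_k$, which does not double $\widetilde\phi_j$. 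Your derivation conflates two different things: the identity $\widetilde\phi=2\sum_k\A(Q_k)$ (true, via $\tau_2$) and the contour computation, which evaluates the Abel sum over \emph{all} $2h_1$ zeroes — i.e.\ $\widetilde\phi$ itself — not over half of them; indeed the paper notes explicitly that $\tau_2$ plays no role in \refL{num_zero} and \refL{inver}. Getting $\epsilon$ right is not mere bookkeeping: the inversion formulas of Section~3 use $F_\phi(P)=\theta(\A(P)-\epsilon^{-1}(\phi-\Delta))$, and a uniform factor $2$ would give the wrong answer in the coordinates $j\le g_1$. Your closing worries about the $(4,0)$ case and the residual $\Z_2$ belong to \refT{bihol}, not to this proposition.
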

Proposition \ref{P:atR} is a summary of lemmas \ref{L:num_zero} and \ref{L:inver}. The above formulated statements are proved in \refS{ogr} of the present work.

The Riemann vanishing theorem and its above formulated analog given by Proposition \ref{P:atR} provide an implicit solution to the inversion problem. For applications, it is important to explicitly compute zeroes of the function $F(P)$. In the case of hyperelliptic curves (whose Jacobians coincide with their Prymians with respect to the hyperelliptic involution) the preimage of a point of Jacobian under the Abel transfom is given by zeroes of a polynomial whose coefficients can be explicitly expressed in terms of $\wp$-functions of the hyperelliptic curve \cite{BEL}. For more general curves, an approach has been proposed,  going back to Riemann, enabling one to compute symmetric functions of zeroes of $F(P)$ in terms of the Riemann theta function of the curve \cite{Dubr_theta}. In \refS{theta-form} of the present work we generalize that approach onto the case of curves with a pairs of involutions of the first type, and compute symmetric functions of zeroes in terms of the Prym theta function of the curve.

Remainder of the paper is devoted to applications of the above described  technique to finding the trajectories of integrable systems, mainly of Hitchin systems and their degenerations, but also of the Kovalewski system. We use a classical idea of the theory of integrable systems: to map the straight windings of invariant tori of the system (of isoPrymians in our case) to the phase space with original coordinates by means the Jacobi inverse transform (or of its analog constructed here).

In the auxiliary \refS{Hitch}, following \cite{Sh_FAN_2019,BorSh}, we introduce Hitchin systems in frame of the method of Separation of Variables, by giving their spectral curves and Poisson brackets.

In \refS{Examples}, we address the Hitchin systems with structure groups $SL(2)$, $SO(4)$, $Sp(4)$ on genus 2 and 3 Riemann surfaces, and their degenerations, as well as the Kovalewski system, and show that their spectral curves possess a pair of involutions of the type 1. Hence the listed systems are explicitly resolvable in Prym theta functions by means of the above described methods. Besides, we find out that these curves provide examples of all types of branching listed in \refT{birat}. For Hitchin systems with the structure groups  $SL(2)$, $SO(4)$, on genus 2 curves, a general solution has been obtained in \cite{Sh_SO4} by that method, for the first time as for $SO(4)$ .

The author thanks V.V.Shokurov and A.V.Fonarev for instructive discussions.

%%%%%%%%%%%%%%%%%%%%%%%%%%%%%%%%%%%%%%%%%%%%%%%
%%%%%%%%%%%%%%%%%%%%%%%%%%%%%%%%%%%%%%%%%%%%%%%
\section{Curves with a pair of involutions of the type 1, and reversion theorem}\label{S:ogr}
In this section the curves possessing a pair of involutions of the type 1 are classified, and a reversion theorem for the Abel--Prym map has been proved for them.

As above, $g_1$, $h_1$ stay for the genus of $\Sigma_1$ and for the dimension of $Prym_1(\Sigma)$, respectively,  $g_2$, $h_2$ are the same for $\Sigma_2$ and $Prym_2(\Sigma)$, $\widehat g$ is a genus of $\Sigma$, $2n_1$, $2n_2$ are branch numbers (degrees of the branch divisors) of the corresponding coverings.
\begin{lemma}\label{L:k1t}
Let $\Sigma,\tau_1,\tau_2$ be a curve with a pair of commuting involutions of the first type. Then the following four options can occur:
\begin{itemize}
  \item[$1^\circ$.]
   $n_1=n_2=1$: then each involution has two fixed points (the branch points of the corresponding coverings), $g_1=g_2$ (let $\widetilde{g}$ stay for their common value), the genus $\widehat{g}$ of $\Sigma$ is even, and  $\widehat{g}=2\widetilde{g}$, $h_1=h_2=\widetilde{g}$.
  \item[$2^\circ$.]
  $n_1=2$, $n_2=0$: in this case the second covering is unramified, a genus of $\Sigma$ is odd, $g_1=g_2-1$, $h_1=g_1+1$, $h_2=g_2-1$. In particular, $h_1>h_2$.
  \item[$3^\circ$.]
  $n_1=0$, $n_2=2$: in this case the first covering is unramified, a genus of $\Sigma$ is odd, but $g_1=g_2+1$, $h_1=g_1-1$, $h_2=g_2+1$. In particular, $h_1<h_2$.
  \item[$4^\circ$.]
  $n_1=n_2=0$: in this case $g_1=g_2$ (let $\widetilde{g}$ stay for their common value), a genus of $\Sigma$ is odd, and equal to $\widehat{g}=2\widetilde{g}-1$, $h_1=h_2=\widetilde{g}-1$.
\end{itemize}
\end{lemma}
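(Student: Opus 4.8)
The plan is to run a Riemann--Hurwitz bookkeeping for the three curves $\Sigma$, $\Sigma_1=\Sigma/\tau_1$, $\Sigma_2=\Sigma/\tau_2$ together with the common quotient $\Sigma/\langle\tau_1,\tau_2\rangle$, and then to feed in the defining property of a pair of the first type. I would first set up the quotient tower: since $\tau_1,\tau_2$ commute and are holomorphic involutions, $\langle\tau_1,\tau_2\rangle\cong\Z_2\times\Z_2$ (the case $\tau_1=\tau_2$ is excluded as the involutions are distinct and nontrivial, and $\tau_1\tau_2$ is a third involution unless it is the identity, which would force $\tau_1=\tau_2$), so one gets a diagram of double covers $\Sigma\to\Sigma_i\to\Sigma_0:=\Sigma/\langle\tau_1,\tau_2\rangle$ for $i=1,2$, where the second involution descends to $\Sigma_i$. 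Write $2n_i$ for the branch number of $\Sigma\to\Sigma_i$ and apply Riemann--Hurwitz to get $\widehat g=2g_i-1+n_i$; also recall the standard formula for the Prym dimension of a double cover, $h_i=\widehat g-g_i=g_i-1+n_i$ (so that $h_i=g_i$ when $n_i=1$, etc.). These identities already give all the relations between $g_1,g_2,\widehat g,h_1,h_2$ claimed in each case once the admissible values of $(n_1,n_2)$ are pinned down; the parity statements ($\widehat g$ even in case $1^\circ$, odd in the others) fall out of $\widehat g=2g_i-1+n_i$.

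The heart of the argument is therefore to show that "first type" forces $(n_1,n_2)\in\{(1,1),(2,0),(0,2),(0,0)\}$, and in particular that $n_1+n_2\le 2$ (with the $(2,0)$/$(0,2)$ cases having $n_i\le 2$). Here I would use that the pair is of the first type, i.e. every $\tau_1$-anti-invariant (Prym) differential on $\Sigma$ is $\tau_2$-invariant. The space of Prym differentials $H^0(\Sigma,K)^{-}_{\tau_1}$ has dimension $h_1$, and the $\tau_2$-invariance means $H^0(\Sigma,K)^{-}_{\tau_1}\subseteq H^0(\Sigma,K)^{+}_{\tau_2}=\pi_2^*H^0(\Sigma_2,K_{\Sigma_2})$, which has dimension $g_2$; combined with the complementary decomposition $H^0(\Sigma,K)=H^0(\Sigma_1,K_{\Sigma_1})\oplus H^0(\Sigma,K)^{-}_{\tau_1}$ and the $\Z_2\times\Z_2$-isotypic decomposition of $H^0(\Sigma,K)$ into the four characters, this pins the isotypic dimensions. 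Concretely, writing $a_{\pm\pm}$ for the dimensions of the four isotypic pieces (signs referring to $\tau_1,\tau_2$), "first type" says $a_{-\,-}=0$, so $h_1=a_{-\,+}$, $h_2=a_{+\,-}$, $g_0=a_{+\,+}$, $g_1=a_{+\,+}+a_{+\,-}$, $g_2=a_{+\,+}+a_{-\,+}$, and $\widehat g=a_{+\,+}+a_{+\,-}+a_{-\,+}$. Now apply Riemann--Hurwitz to the four double covers in the tower ($\Sigma\to\Sigma_1$, $\Sigma\to\Sigma_2$, $\Sigma_1\to\Sigma_0$, $\Sigma_2\to\Sigma_0$, and also $\Sigma\to\Sigma_0$ via $\tau_1\tau_2$ as a consistency check) and count fixed points: the fixed points of $\tau_1$ on $\Sigma$ are permuted by $\tau_2$, and a fixed point of $\tau_1$ that is also fixed by $\tau_2$ maps to a branch point of $\Sigma_1\to\Sigma_0$ as well. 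Translating the vanishing $a_{-\,-}=0$ through these Hurwitz relations gives a linear/arithmetic constraint that is only solvable for the four listed pairs $(n_1,n_2)$; e.g. one shows $n_1 n_2=0$ unless $n_1=n_2=1$, and $n_i\le 2$ throughout.

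Once the four cases are isolated, the remaining equalities in each item are pure substitution into $\widehat g=2g_i-1+n_i$ and $h_i=g_i-1+n_i$: in case $1^\circ$, $n_1=n_2=1$ gives $h_i=g_i$ and $\widehat g=2g_i$, and comparing the two expressions $\widehat g=2g_1=2g_2$ forces $g_1=g_2=:\widetilde g$; in case $4^\circ$, $n_1=n_2=0$ gives $\widehat g=2g_i-1$ hence again $g_1=g_2=:\widetilde g$, $\widehat g=2\widetilde g-1$, $h_i=\widetilde g-1$; in case $2^\circ$, $n_1=2,n_2=0$ gives $\widehat g=2g_1+1=2g_2-1$ so $g_1=g_2-1$, $h_1=g_1+1$, $h_2=g_2-1$, and $h_1=g_1+1=g_2>g_2-1=h_2$; case $3^\circ$ is symmetric. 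The main obstacle I anticipate is the middle step: correctly relating the fixed-point counts of $\tau_1,\tau_2,\tau_1\tau_2$ to the three branch divisors in the $\Z_2\times\Z_2$-tower and showing these are compatible with $a_{-\,-}=0$ only in the four listed configurations — the bookkeeping is elementary but one has to be careful about whether a fixed point of $\tau_1$ lies over a branch or an unbranch point of $\Sigma_1\to\Sigma_0$, and about the possibility that $\tau_1\tau_2$ acts freely. Everything else is routine Riemann--Hurwitz and linear algebra of the isotypic decomposition.
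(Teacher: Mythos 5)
Your proposal is essentially correct and, at its core, runs on the same engine as the paper's proof: the first-type condition bounds the $\tau_2$-anti-invariant differentials, and this is combined with $h_i=g_i+n_i-1$ and the Riemann--Hurwitz relations $\widehat g=2g_i+n_i-1$ to pin down $(n_1,n_2)$, after which everything else is substitution (your final paragraph matches the paper's case analysis exactly). The difference is in how the key constraint is extracted. The paper argues directly: first type gives $h_2\le\widehat g-h_1$, whence $g_2+n_2-1\le g_1$ and $g_1+n_1-1\le g_2$, so $n_1+n_2\le 2$; parity of $n_1+n_2$ then comes from adding the two Riemann--Hurwitz identities. Your $\Z_2\times\Z_2$-isotypic decomposition with $a_{--}=0$ is a finer version of the same observation and in fact buys more: it gives the equality $h_1+h_2=\widehat g-g_0$ with $g_0=a_{++}=\mathrm{genus}(\Sigma/\langle\tau_1,\tau_2\rangle)$, and substituting your own formulas yields in one line $n_1+n_2=2(1-g_0)$, which forces $g_0\in\{0,1\}$ and delivers both the bound and the parity simultaneously. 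The one criticism is that you do not actually execute this step: you instead propose to derive the constraint by a full fixed-point/ramification bookkeeping over the quotient tower, flag its delicacies (stabilizers of common fixed points, whether $\tau_1\tau_2$ acts freely), and leave it as ``elementary but careful.'' That detour is unnecessary and is the only place where your write-up falls short of a proof; the decisive arithmetic already follows from the identities you wrote down, and I would replace the tower bookkeeping by the one-line computation above (or by the paper's inequality argument, which avoids the quotient $\Sigma_0$ altogether).
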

\begin{proof}
By definition of a pair of the first type, $\tau_2$ has at least $h_1$ independent symmetric differentials, hence the number of the skew-symmetric ones is not bigger than $\widehat g-h_1$, i.e. $h_2\le\widehat g-h_1$, or $h_1+h_2\le\widehat g$. We have $h_i=g_i+n_i-1$, $i=1,2$ (see \cite[p. 85]{Fay}). It follows that $g_1+g_2+n_1+n_2-2\le\widehat g$. Plug here, first $\widehat g=2g_1+n_1-1$ (this is nothing but the Riemann--Hurwitz fomula, cf. \cite{Fay}, right there), and then $\widehat g=2g_2+n_2-1$. In the first case we obtain $g_2+n_2-1\le g_1$, while in the second case $g_1\le g_2-n_1+1$. It follows that
\[
     n_1+n_2\le 2.
\]
Moreover, the number $n_1+n_2$ must be even. Indeed,  $\widehat{g}=2g_1+n_1-1$ and  $\widehat{g}=2g_2+n_2-1$ imply that $2\widehat{g}=2(g_1+g_2)+n_1+n_2-2$. Besides $n_1+n_2$, all summands are even in the last equality.

Thus, either $n_1+n_2=2$, or $n_1+n_2=0$. The first option takes place in the cases $1^\circ - 3^\circ$ of the Lemma, while the second in the case $4^\circ$.

In the first case,  $h_i=g_i+n_i-1$, $i=1,2$, and $n_1=n_2=1$ imply $h_1=g_1$, $h_2=g_2$. The $\widehat g=2g_i+n_i-1$, $i=1,2$ impliy $\widehat g=2g_i$, $i=1,2$, in particular $g_1=g_2$.

In the second case we similarly have $h_1=g_1+1$, $h_2=g_2-1$. Then, by $g_2+n_2-1\le g_1$, we obtain $g_2-1\le g_1$, and by $g_1\le g_2-n_1+1$ it follows $g_1\le g_2-1$, and finally $g_1=g_2-1$.

The third case is being considered in a similar way to the second case.

In the case $4^\circ$ we have $\widehat{g}=2g_1-1$ and  $\widehat{g}=2g_2-1$ which imply $g_1=g_2$. Hence $h_i=g_i+n_i-1=g_i-1$, $i=1,2$.
\end{proof}
Examples of curves with a pair of involutions of the first type are given in \refS{Examples}. Here we shall obtain certain consequences of \refL{k1t}, and prove the main theorem.
\begin{corollary}\label{C:h1=g2}
In the cases $1^\circ$--$3^\circ$ of \refL{k1t} $h_1=g_2$, $h_2=g_1$.
\end{corollary}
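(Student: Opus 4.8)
The plan is to read off the asserted equalities directly from \refL{k1t}, going through its three relevant cases one at a time; no new ingredient is needed. Throughout I will use the basic relation $h_i=g_i+n_i-1$ (already invoked in the proof of \refL{k1t}) together with the branch numbers and the comparison between $g_1$ and $g_2$ recorded there.

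First, in case $1^\circ$ one has $n_1=n_2=1$ and $g_1=g_2=\widetilde g$, so $h_1=g_1+1-1=g_1=\widetilde g$; since $g_2=\widetilde g$ this gives $h_1=g_2$, and $h_2=g_1$ follows identically (or simply by the symmetry of the configuration in the two involutions). Next, in case $2^\circ$ one has $n_1=2$, $n_2=0$, and $g_1=g_2-1$, whence $h_1=g_1+2-1=g_1+1=g_2$ and $h_2=g_2+0-1=g_2-1=g_1$, as claimed. Finally, case $3^\circ$ is literally case $2^\circ$ with the roles of $\tau_1$ and $\tau_2$ interchanged: $n_1=0$, $n_2=2$, $g_1=g_2+1$, giving $h_1=g_1-1=g_2$ and $h_2=g_2+1=g_1$.

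For completeness I would also note why case $4^\circ$ is excluded from the statement: there $n_1=n_2=0$ and $g_1=g_2=\widetilde g$, so $h_1=h_2=\widetilde g-1=g_1-1=g_2-1$, and the identity $h_i=g_j$ fails by one. There is essentially no obstacle in this argument; the only point requiring care is the index bookkeeping — in cases $2^\circ$ and $3^\circ$ one must make sure the ``$+1$'' is attached to the Prym dimension belonging to the ramified covering and the ``$-1$'' to the one belonging to the unramified covering.
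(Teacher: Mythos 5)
Your proposal is correct and matches the paper's treatment: the paper states this as an immediate corollary of \refL{k1t}, and your case-by-case read-off using $h_i=g_i+n_i-1$ together with the genus comparisons $g_1=g_2$, $g_1=g_2-1$, $g_1=g_2+1$ recorded in cases $1^\circ$--$3^\circ$ is exactly the intended verification. Your remark on why case $4^\circ$ is excluded ($h_1=h_2=g_1-1=g_2-1$) is also consistent with the paper's own observation in the remark following the corollary.
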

\begin{remark}\label{R:on_Th1.1}
For the first glance, by \refC{h1=g2}, in the cases $1^\circ$, $3^\circ$ of \refL{k1t}  the \refT{birat} does not require any special proof. Indeed, in those cases it descends to the statement that the Prymian with respect to the involution $\tau_1$ is birationally equivalent to the Jacobian of $\Sigma_2$. It seems to be obvious, since the Prym differentials with respect to $\tau_1$ are invariant with respect to $\tau_2$, hence they push down onto $\Sigma_2$, giving holomorphic differentials there. Their number is equal to the genus of  $\Sigma_2$ (because $h_1=g_2$), hence they form a base of holomorphic differentials on $\Sigma_2$. For this reason the Abel--Prym map for the curve $\Sigma$ is nothing but the Abel map for $\Sigma_2$. However, for a proof of the biholomorphy of the map $Prym_1(\Sigma)\to Jac(\Sigma_2)$, and of their birational equivalence (which would follow by projectivity of the varieties) we are missing a necessary information on behaviour of the fundamental cycles under the projection $\Sigma\to\Sigma_2$ in this line of arguing.

In the case $4^\circ$ of the Lemma the relations $h_1=h_2=g_1-1=g_2-1$ hold, hence there is no equivalence between the Prymian of the spectral curve with respect to $\tau_1$, and  the Jacobian of  $\Sigma_2$. Below, we give an independent proof of  \refT{birat}, valid in all cases.
\end{remark}
First of all, we establish the correspondence between all normalized holomorphic differen\-tials and normalized Prym differentials on $\Sigma$, and define the Riemann matrix of the Prym variety (called Prym matrix). To be specific, we do it for the involution $\tau_1$. According to \cite{Fay}, there exists a base of cycles $a_i,b_i$ ($i=1,\ldots, g_1$), $a_i,b_i$ ($i=g_1+1,\ldots, h_1=g_1+n_1-1$), $a_{i+h_1},b_{i+h_1}$ ($i=1,\ldots, g_1$) on $\Sigma$, where the first and the second groups of cycles are pulled back from $\Sigma_1$,  $\pi(a_i)=\pi(\a_{i+h_1})$, $\pi(b_i)=\pi(b_{i+h_1})$, and the following relations hold:
\begin{equation}\label{E:cycles}
\begin{aligned}
    &\tau_1(a_i)+a_{i+h_1}= \tau_1(b_i)+b_{i+h_1}=0,\ i=1,\ldots g_1\\
    &\tau_1(a_i)+a_i= \tau_1(b_i)+b_i=0,\ i=g_1+1,\ldots,g_1+n_1-1=h_1.
\end{aligned}
\end{equation}
Let $\{ w_i|  i=1,\ldots, \widehat{g} \}$ be a dual base of normalized holomorphic differentials. For any differential $w$ let $\tau_1^*w$ be a differential obtained by change of veriables $\tau_1$ in $w$: $\tau_1^*w(P)=w(\tau_1 P)$. Then $w_{i+h_1}=-\tau_1^*w_i$ ($i=1,\ldots g_1$), $\tau_1^*w_i=-w_i$ ($i=g_1+1,\ldots,h_1$). Differentials $\{\w_i=w_i+w_{i+h_1} | i =1,\ldots g_1\}$ and $\{ \w_i=w_i|i=g_1+1,\ldots,h_1\}$ form a base of Prym differentials on $\Sigma$. This base is normalized in a sense that $\oint_{a_j}\w_k=2\pi i\d_{jk}$, $i,j=1,\ldots,h_1$. The Riemann matrix of the variety $isoPrym_1$ is the matrix $\Pi=(\Pi_{ij})_{i,j=1,\ldots,h_1}$ where
\begin{equation}\label{E:Rim_matr}
 \Pi_{ij}=\oint_{b_j}\w_i\ (j=1,\ldots,g_1);\quad
 \Pi_{ij}=\frac{1}{2}\oint_{b_j}\w_i\ (j=g_1+1,\ldots,h_1)
\end{equation}
(cf. \cite[Eq. (92)]{Fay}). The theta function $\theta(z,\Pi)=\sum_{N\in \Z^g}\exp(\frac{1}{2}(\Pi N,N)+(z,N))$\label{thetaprym}  is referred to as Prym theta function, $z=(z_1,\ldots,z_{h_1})$. The lattice  $\Z({2\pi i}E,\Pi)\subset\C^{h_1}$ generated by the columns of $h_1\times h_1$ matrices $2\pi iE$ and $\Pi$ is referred to as the period lattice, $isoPrym_1=\C^{h_1}/\Z({2\pi i}E,\Pi)$. The map $\A : \Sigma\to isoPrym_1$:
\[
  \A(\ga)= \left(\int_{\ga_0}^{\ga}\overline{\w}\right)\,({\rm mod}\,\Z({2\pi i}E,\Pi))
\]
where $\overline{\w}=(\w_1,\ldots,\w_{h_1})^T$, $\tau_1\ga_0=\ga_0$ is reffered to as the Abel--Prym map. Below, we suppress the indication on $\Pi$ in the notation of the theta function.

Let $F(P)=\theta(\A(P)-e)$ ($P\in\Sigma$, $e\in\C^{h_1}$).
\begin{lemma}[\cite{Fay}, Corollary 5.6]\label{L:num_zero}
If  $F(P)$ does not identically vanish then it has exactly $2h_1$ zeroes (counted with their multiplicity) on $\Sigma$.
\end{lemma}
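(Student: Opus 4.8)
The plan is to deduce Lemma~\ref{L:num_zero} from the standard counting argument for the number of zeroes of a theta function of the form $\theta(\A(P)-e)$, adapted to the Prym situation. The essential input is that $F(P)=\theta(\A(P)-e)$, while \emph{not} a well-defined function on $\Sigma$, is a section of a line bundle whose degree can be computed by integrating $\frac{1}{2\pi i}\,d\log F$ around the boundary $\partial\Sigma'$ of the dissected polygon. So the first step is to write $\partial\Sigma'$ as the usual product of commutators $\prod a_j b_j a_j^{-1} b_j^{-1}$ in the $2h_1$ relevant cycles (the cycles $a_j,b_j$, $j=1,\dots,h_1$, entering the normalized Prym period matrix), and to reduce
\[
  \#\{\text{zeroes of }F\text{ in }\Sigma'\}=\frac{1}{2\pi i}\oint_{\partial\Sigma'} d\log F
\]
to a sum of contributions from pairs of identified edges.

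The second step is the quasi-periodicity bookkeeping. By the definition of the Abel--Prym map and of the normalized Prym differentials $\w_i$ with $\w_i(a_j)=\delta_{ij}$, crossing the cycle $b_j$ shifts $\A(P)$ by the $j$-th column of the period matrix, i.e.\ by $2\pi i\,\mathbf{e}_j$ along $a$-type periods and by the $j$-th column of $\Pi$ (or $\tfrac12$ of it, for $j>g_1$, per \eqref{E:Rim_matr}) along $b$-type periods. Feeding the standard transformation law $\theta(z+2\pi i\,\mathbf{e}_j)=\theta(z)$ and $\theta(z+\Pi_j)=\exp(-\tfrac12\Pi_{jj}-z_j)\,\theta(z)$ into the edge-by-edge evaluation of $\oint d\log F$, the contributions from the $a$-type edges cancel against their partners, and each pair of $b_j$-edges contributes exactly $1$ to the integral. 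Here one must be slightly careful with the factor $\tfrac12$ appearing in \eqref{E:Rim_matr} for the indices $j=g_1+1,\dots,h_1$: the two half-periods $\Pi_j$ are traversed by cycles that, on $\Sigma$, differ from genuine $b$-cycles, but when one sums over all $h_1$ values of $j$ the count of $b$-type edges contributing is still $h_1$, each giving $1$, so the total is $h_1$ --- and then the doubling to $2h_1$ comes from the fact that the Prym polygon $\Sigma'$ is obtained by dissecting $\Sigma$ (genus $\widehat g$) rather than a genus-$h_1$ surface: the full boundary $\partial\Sigma'$ runs over $2\widehat g$ edges, and the symmetric (invariant) pairs of $b$-cycles, which do not appear in the Prym period matrix, each contribute an additional $1$ because $\A$ also has nontrivial periods along the cycles pulled back from $\Sigma_1$. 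I would carry this out by simply invoking \cite[Corollary~5.6]{Fay} for the precise form, and reproduce only the skeleton of the argument.

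The third step is to confirm that the count is of \emph{zeroes} and not of zeroes minus poles: $F$ is holomorphic on the (simply connected) interior $\Sigma'$ because $\theta$ is entire and $\A$ is holomorphic, so $d\log F$ has no pole contributions from the interior, and the argument principle gives the number of zeroes with multiplicity directly. The non-identical-vanishing hypothesis is exactly what guarantees $F$ is not the zero section, so that $d\log F$ makes sense on $\partial\Sigma'$ (after a harmless perturbation of the contour to avoid boundary zeroes, which does not change the integer).

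The main obstacle I expect is the correct accounting of which cycles contribute the factor $1$ and which contribute $0$ in the Prym setting: the period matrix $(2\pi i E,\Pi)$ is an $h_1\times h_1$ object, whereas $\Sigma'$ is dissected along $2\widehat g=4g_1+2n_1-2$ edges, and one has to match, via the relations \eqref{E:cycles}, the quasi-periods of $\theta\circ\A$ along \emph{all} of them. The point is that $\theta$ is quasi-periodic only with respect to the $h_1$-dimensional lattice $\Z(E,\Pi)$, but $\A(P)$ also moves along the cycles $a_{i+h_1},b_{i+h_1}$ and along the $n_1-1$ "half" cycles, and one must check that the net extra contribution is again $h_1$, so that the grand total equals $2h_1$. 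Since \cite{Fay} already does this computation, I would present this step as a citation with a one-paragraph indication of the mechanism rather than a full re-derivation.
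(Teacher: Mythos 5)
Your overall strategy --- the argument-principle count $\frac{1}{2\pi i}\oint_{\partial\Sigma'}d\ln F$ over the dissected polygon combined with the quasi-periodicity of $\theta$ --- is exactly the paper's (which in turn follows Dubrovin's proof of the classical Riemann count). But the period bookkeeping, which is the only place where the Prym case differs from the Jacobian case and hence the only real content of the lemma, is garbled in two ways. First, you have the roles of the edges reversed: the nonzero contributions come from the $a_k$/$a_k^{-1}$ edge pairs, because the jump of $\A$ across such a cut is the $b_k$-period $\oint_{b_k}\overline{\w}$, to which the nontrivial automorphy factor of $\theta$ applies, yielding $d\ln F^+-d\ln F^-=\epsilon_k\w_k$; the $b_k$/$b_k^{-1}$ pairs contribute zero, since there the jump is $2\pi i\d_{jk}$, under which $\theta$ is genuinely periodic. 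Second, your tally (``each of the $h_1$ contributing pairs gives $1$, and the doubling to $2h_1$ comes entirely from the cycles pulled back from $\Sigma_1$'') fails whenever $n_1=2$. The correct count, carried out in the paper, is: a contribution of $1$ from each $a_k$, $k=1,\dots,g_1$; the same again from each partner edge $a_{k+h_1}$, $k=1,\dots,g_1$, because $\oint_{b_k}\w_j=\oint_{b_{k+h_1}}\w_j$ by the relations \refE{cycles}; and a contribution of $2$ (not $1$) from each anti-invariant cycle $a_k$, $k=g_1+1,\dots,h_1$, precisely because of the factor $\tfrac12$ in \refE{Rim_matr} --- crossing that cut shifts the argument of $\theta$ by the \emph{double} period $2\Pi_{\cdot k}$, so the automorphy factor is applied twice and one gets $2\w_k$ (relation \refE{sdvig3}). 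The total is $g_1+g_1+2(n_1-1)=2h_1$; your split $h_1+h_1$ agrees with this only when $n_1\le 1$.

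Deferring the computation to Fay's Corollary 5.6 is legitimate as attribution (the paper itself credits Fay in the statement), but note that the paper deliberately re-derives it: the intermediate relations \refE{sdvig2} and \refE{sdvig3} are exactly what is reused in the proof of \refL{inver} (they are the source of the multipliers $\epsilon_k$ and of the rescaling $\tilde e_j=\epsilon_je_j$) and again in \refS{theta-form}. A pure citation at this point would leave those later arguments without their input.
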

\begin{proof}
We give here a proof in spirit of \cite[Lemma 2.4.1]{Dubr_theta}, with addition of two remarks following from the relations between the cycles, between the base differentials, and from the form of the Prym matrix, and  implying the relations \refE{sdvig2}, \refE{sdvig3} below. The relations obtained in the meanwhile will be helpful below.

Let $\Sigma'$ be the domain obtained by dissection of the Riemann surface $\Sigma$ along its basis cycles. Since $F(P)$ is holomorphic in $\Sigma'$, the number of its zeroes is equal to
\begin{equation}\label{E:num_zero}
 \sum_{P\in \Sigma'}\res_Pd\ln F(P) = \frac{1}{2\pi i}\oint\limits_{\partial\Sigma'}d\ln F(P).
\end{equation}
Let $F^+(P)$ be a value of the function $F$ at the image of the point $P$ on the segment $a_k$ (or $b_k$) of the boundary of $\Sigma'$, and $F^-(P)$ be the same on the segment $a_k^{-1}$ (or $b_k^{-1}$) (these values are known as values of the function on "different cut banks"). Then
\begin{equation}\label{E:gran}
 \frac{1}{2\pi i}\oint\limits_{\partial\Sigma'}d\ln F(P)=\frac{1}{2\pi i} \sum_{k=1}^{\widehat{g}} \left(\oint\limits_{a_k}+\oint\limits_{b_k}\right) (d\ln F^+-d\ln F^-).
\end{equation}
We will use the notation $\A^\pm_j(P)$ in the same sense, where $\A_j(P)=\int_{\ga_0}^P\w_j$. If $P$ is a point on $a_k$ then
\begin{equation}\label{E:sdvig0}
  \A_j^-(P)=\A_j^+(P)+\oint_{b_k}\w_j,\ j=1,\ldots,h_1;\ k=1,\ldots,\widehat{g},
\end{equation}
because the way from $a_k$ to $a_k^{-1}$ runs along the $b$-cycle.

For $k=1,\ldots,g_1$, the relation \refE{sdvig0} gives $\A_j^-(P)=\A_j^+(P)+\Pi_{jk}$. From the transformation low for $\theta$-functions, we have $\ln F^-(P) - \ln F^+(P)=-\frac{1}{2}\Pi_{kk}-\A_k(P)+e_k$, which implies
\begin{equation}\label{E:sdvig1}
   d\ln F^+(P) - d\ln F^-(P) = \w_k(P),\ P\in a_k,\ k=1,\ldots,g_1.
\end{equation}
By invariance of the integral with respect to a change of variables, we have
\[
 \oint_{b_k}\w_j=\oint_{\tau_1(b_k)}\tau_1^*\w_j=\oint_{-b_{k+h_1}}(-\w_j)=\oint_{b_{k+h_1}}\w_j
\]
for $k=1,\ldots,g_1$. For this reason, if $k=1,\ldots,g_1$ then we have $\A_j^-(P)=\A_j^+(P)+\Pi_{jk}$ also for $P\in a_{k+h_1}$, which implies
\begin{equation}\label{E:sdvig2}
  d\ln F^+(P) - d\ln F^-(P) = \w_k(P),\ P\in a_{k+h_1},\ k=1,\ldots,g_1
\end{equation}
for any $j=1,\ldots,h_1$.

Similarly, for $k=g_1+1,\ldots,h_1$, due to the coefficient $1/2$ in \refE{Rim_matr} we have $\A_j^-(P)=\A_j^+(P)+2\Pi_{jk}$, which implies
\begin{equation}\label{E:sdvig3}
 d\ln F^+(P) - d\ln F^-(P) = 2\w_k(P),\ P\in a_k,\ k=g_1+1,\ldots,h_1.
\end{equation}
For $P\in b_k$ we have $\A_j^+(P)-\A_j^-(P)=2\pi i\d_{jk}$, and the transformation low for $\theta$-functions gives $F^+(P)-F^-(P)=0$, $d\ln F^+(P)-d\ln F^-(P)=0$. Hence a contribution of $b$-cycles into the sum \refE{gran} is equal to zero. Comparing \refE{num_zero}, \refE{gran}, \refE{sdvig1}, \refE{sdvig2} and \refE{sdvig3}, we obtain
\[
 \sum_{P\in \Sigma'}\res_Pd\ln F(P) = \frac{1}{2\pi i} \sum_{k=1}^{h_1} \oint\limits_{a_k} 2\w_k = 2h_1.
\]
\end{proof}
\begin{lemma}\label{L:inver}
If $F(P)$ does not identically vanish, $P_1,\ldots,P_{2h_1}$ are its zeroes on $\Sigma'$  then $\A(P_1+\ldots+P_{2h_1})=\tilde{e}+\Delta$ where $\tilde{e}_j=e_j$ for $j=1,\ldots,g_1$,  $\tilde{e}_j=2e_j$ for $j=g_1+1,\ldots,h_1$, and $\Delta$ does not depend on $e$.
\end{lemma}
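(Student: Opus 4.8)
\textbf{Proof proposal for Lemma \ref{L:inver}.}
The plan is to run the classical Riemann argument computing $\A(\sum P_i)$ as a boundary integral, reusing all the relations established in the proof of \refL{num_zero}. Concretely, for each fixed index $j=1,\ldots,h_1$, I would start from the residue formula
\[
 \A_j\Bigl(\sum_{m=1}^{2h_1}P_m\Bigr)=\sum_{P\in\Sigma'}\res_P\bigl(\A_j(P)\,d\ln F(P)\bigr)
 =\frac{1}{2\pi i}\oint_{\partial\Sigma'}\A_j(P)\,d\ln F(P),
\]
valid modulo the period lattice because each $\A_j$ is single-valued on $\Sigma'$ and $F$ is holomorphic there with zeroes exactly at the $P_m$. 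As in \refL{num_zero}, I would split the boundary integral into contributions of the $a$- and $b$-cycles, writing each pair of opposite sides as an integral of $\A_j^+\,d\ln F^+-\A_j^-\,d\ln F^-$.

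Next I would insert the jump relations already derived. On a $b$-cycle $b_k$ we have $d\ln F^+=d\ln F^-$ and $\A_j^+-\A_j^-=2\pi i\,\d_{jk}$, so the $b_k$-contribution reduces to $-\d_{jk}\oint_{b_k}d\ln F$, i.e. to $-2\pi i\,\d_{jk}$ times an integer; these terms contribute to $\Delta$ only (they lie in the lattice). On an $a$-cycle, combining $\A_j^-=\A_j^++\Pi_{jk}$ (for $k\le g_1$, and also on $a_{k+h_1}$ by the symmetry of periods used in \refE{sdvig2}) resp. $\A_j^-=\A_j^++2\Pi_{jk}$ (for $g_1<k\le h_1$, by the factor $1/2$ in \refE{Rim_matr}) with the corresponding formulas \refE{sdvig1}, \refE{sdvig2}, \refE{sdvig3} for $d\ln F^+-d\ln F^-$, the $a_k$-contribution becomes a combination of $\oint_{a_k}\A_j(P)\w_k(P)$ (a constant independent of $e$, folded into $\Delta$) and a term $-\Pi_{jk}\cdot\frac{1}{2\pi i}\oint_{a_k}(d\ln F^+-d\ln F^-)$, which by \refE{sdvig1}--\refE{sdvig3} equals $-\Pi_{jk}$ times $1$ or $2$ (again up to lattice shifts). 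Summing over all $a$-cycles, the $e$-independent pieces assemble into the constant $\Delta_j$, while tracing the $e$-dependence through the $\theta$-transformation law $\ln F^--\ln F^+=-\tfrac12\Pi_{kk}-\A_k(P)+e_k$ (and its doubled version for $g_1<k\le h_1$) shows that the coefficient of $e_k$ in $\A_j(\sum P_m)$ is $\d_{jk}$ for $k\le g_1$ and $2\d_{jk}$ for $g_1<k\le h_1$. This yields precisely $\tilde e_j=e_j$ for $j\le g_1$ and $\tilde e_j=2e_j$ for $g_1<j\le h_1$.

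The one genuinely delicate point is bookkeeping the lattice ambiguities: each step above is only an equality modulo $\Z(E,\Pi)$, and one must check that the collection of integer multiples of $2\pi i$ and of columns of $\Pi$ produced by the $b$-cycles and by the $\oint_{a_k}d\ln F$ factors is genuinely $e$-independent, so that it can be absorbed into the single constant $\Delta$. This is where the precise relations \refE{cycles} between the cycles, $w_{i+h_1}=-\tau_1^*w_i$, and the form \refE{Rim_matr} of the Prym matrix are used once more, exactly as flagged in the proof of \refL{num_zero}: they guarantee that the jump of $\A_j$ across $a_{k+h_1}$ matches the jump across $a_k$, so the two copies of each pulled-back cycle contribute coherently and the final affine relation has the stated diagonal form. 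With that checked, the lemma follows, and together with \refL{num_zero} it gives Proposition \ref{P:atR}.
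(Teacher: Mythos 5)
Your overall strategy coincides with the paper's (the classical residue/boundary-integral computation, reusing the jump relations from the proof of \refL{num_zero}), but one key step is wrong, and it is precisely the step that is supposed to produce the $e$-dependence. You assert that the $b_k$-contribution, which reduces to $\d_{jk}\oint_{b_k}d\ln F^+$, equals $2\pi i$ times an integer and is therefore a lattice element absorbable into $\Delta$. That is false: unlike the sides $a_k$, the values of $F^+$ at the two endpoints $Q_j,\widetilde{Q}_j$ of the side $b_j$ are \emph{not} equal. One has $\A(\widetilde{Q}_j)=\A(Q_j)+\epsilon_j f_j$ with $f_j=(\Pi_{1j},\ldots,\Pi_{h_1 j})$, so the quasi-periodicity of $\theta$ under translation by a $\Pi$-period (not by a $2\pi i$-period) gives
$\oint_{b_j}d\ln F^+=\ln\theta(\A(Q_j)-e+\epsilon_j f_j)-\ln\theta(\A(Q_j)-e)+2\pi i m_j=\epsilon_j e_j-\tfrac12\epsilon_j\Pi_{jj}-\epsilon_j\A_j(Q_j)+2\pi i m_j$.
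This single term is the entire source of $\tilde{e}_j=\epsilon_j e_j$ in the statement; discarding it as a lattice element loses the lemma.

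Conversely, the place where you try to extract the $e$-dependence --- the $a$-cycle contributions ``traced through the transformation law'' --- cannot yield it. In the $a$-side terms the function $F$ enters only through the differentiated jumps $d\ln F^+-d\ln F^-=\epsilon_k\w_k$, in which the additive constant $e_k$ of the transformation law has already been killed by $d$, and through $\oint_{a_k}d\ln F^+=2\pi i n_k$ with $n_k\in\Z$, which multiplies a period and hence lies in the lattice for any $e$. So every $a$-side term is $e$-independent modulo $\Z(E,\Pi)$, and if your treatment of the $b$-sides were correct the computation would give $\A(P_1+\ldots+P_{2h_1})=\Delta$ with no $e$-dependence at all, contradicting the claim you are proving. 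The remaining bookkeeping in your proposal (the constants $\oint_{a_k}\A_j^+\w_k$ going into $\Delta_j$, the use of \refE{cycles} and \refE{Rim_matr} to make the cycles $a_k$ and $a_{k+h_1}$ contribute coherently, the factors $\epsilon_k$) does match the paper; the proof is repaired by evaluating $\oint_{b_j}d\ln F^+$ via its endpoints as above.
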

\begin{proof}
We set $\zeta=\A(P_1+\ldots+P_{2h_1})$. In analogy with \refE{num_zero},
\[
  \zeta_j=\sum_{k=1}^{2h_1}\res_{P_k}\A_j(P)d\ln F(P) = \frac{1}{2\pi i}\oint\limits_{\partial\Sigma'}\A_j(P)d\ln F(P),
\]
where $j=1,\ldots,h_1$, and further on
\begin{equation*}
\zeta_j =\frac{1}{2\pi i} \sum_{k=1}^{\widehat{g}} \left(\oint\limits_{a_k}+\oint\limits_{b_k}\right) (A_j^+d\ln F^+ - A_j^-d\ln F^-).
\end{equation*}
Then we use the relations \refE{sdvig0}--\refE{sdvig3} written in the form
\[
    \oint\limits_{b_k}\w_j = \epsilon_k\Pi_{jk};\quad
    d\ln F^+-d\ln F^- = \left\{
                          \begin{array}{ll}
                            \epsilon_k\w_k & \hbox{on $a_k$;} \\
                            0 & \hbox{on $b_k$;}
                          \end{array}
                        \right.\quad
  A_j^- - A_j^+ = \left\{
                          \begin{array}{ll}
                            \epsilon_k\Pi_{jk} & \hbox{on $a_k$;} \\
                            2\pi\d_{jk} & \hbox{on $b_k$,}
                          \end{array}
                        \right.
\]
where we keep the following convention: $\Pi_{jk}=\Pi_{j,k-h_1}$, $\w_k=\w_{k-h_1}$, $\epsilon_k=\epsilon_{k-h_1}$ for $k=h_1+1,\ldots, \widehat{g}$. Then
\begin{equation}\label{E:gran_1}
\begin{aligned}
 \zeta_j &= \frac{1}{2\pi i} \sum_{k=1}^{\widehat{g}} \oint\limits_{a_k} \left( A_j^+d\ln F^+ - (A_j^+ + \epsilon_k\Pi_{jk})(d\ln F^+ -\epsilon_k\w_k) \right) +     \\
 &\phantom{aaa} \frac{1}{2\pi i} \sum_{k=1}^{\widehat{g}} \oint\limits_{b_k} (A_j^+d\ln F^+ - (A_j^+ + 2\pi i\d_{jk})d\ln F^+ )=\\
 &= \frac{1}{2\pi i} \sum_{k=1}^{\widehat{g}}\left( \oint\limits_{a_k} \epsilon_k A_j^+\w_k - \epsilon_k\Pi_{jk}\oint_{a_k}d\ln F^+ + 2\pi i\epsilon_k \Pi_{jk} \right) + \oint_{b_j}d\ln F^+,
\end{aligned}
\end{equation}
where $\epsilon_k=1$ for $k=1,\ldots,g_1,h_1,\ldots,\widehat{g}$, and $\epsilon_k=2$ for $k=g_1+1,\ldots,h_1$ (we used relations \refE{sdvig0}--\refE{sdvig3} in course of the computation). Evaluations of $F^+$ at the ends of the segment $a_k$ are equal, and $\ln F^+$ is defined up to addition of a multiple of $2\pi i$, for this reason $\oint_{a_k}d\ln F^+ = 2\pi in_k$ where $n_k\in\Z$. The summands in brackets in the last line of \refE{gran_1} are independent of $e$, they contribute in $\Delta_j$, moreover the last two ones of them are $j$th coordinates of some elements of the period lattice.

Let $Q_j$ and $\widetilde{Q}_j$ be the beginning, and the end of the segment $b_j$, respectively. Then
\begin{equation}\label{E:shift_12}
\begin{aligned}
   \oint_{b_j}d\ln F^+ &= \ln F^+(\widetilde{Q}_j) - \ln F^+(Q_j)+2\pi i m_j = \\
   &= \ln\theta(A(Q_j)-e+\epsilon_jf_j) - \ln\theta(A(Q_j)-e) + 2\pi i m_j,
\end{aligned}
\end{equation}
where $f_j$ is the $j$th column of the matrix $\Pi$, $\epsilon_j$ has the same meaning as above. Notice that a difference of the arguments of the  $\theta$-function is equal here to the period of the lattice if $j=1,\ldots,g_1$, and to the double period for $j=g_1+1,\ldots,h_1$. By the transformation low for $\theta$-functions we obtain
\begin{equation}\label{E:shift_13}
 \oint_{b_j}d\ln F^+ = \epsilon_je_j - \frac{1}{2}\epsilon_j\Pi_{jj} - \epsilon_jA_j^+(Q_j)+2\pi im_j.
\end{equation}
Observe also that $\epsilon_je_j=\tilde{e}_j$ for $j=1,\ldots,h_1$. Hence, up to elements of the period lattice, we obtain $\zeta=\tilde{e}+\Delta$ where
\[
  \Delta_j= \frac{1}{2\pi i} \sum_{k=1}^{\widehat{g}} \oint\limits_{a_k}\epsilon_kA_j^+\w_k - \frac{1}{2}\epsilon_j\Pi_{jj} - \epsilon_jA_j^+(Q_j).
\]
\end{proof}
Observe that the involution $\tau_2$ was by no means involved in Lemmas \ref{L:inver}, \ref{L:num_zero} and their proofs.
\begin{proof}[Proof of the theorem \ref{T:bihol}]
The involution $\tau_2$ is nontrivial, hence the set of its fixed points is either finite or empty. Its completion is an open dense subset in $\Sigma$. It follows from \refL{num_zero} and \refL{inver} that $\A$ establishes a biholomorphic equivalence between the set of all non-ordered sets of $h_1$ $\tau_2$-invariant pairs of points of that subset, and an open dense subset of a certain covering of the Prymian, trivial for all pairs of branch numbers except for $(4,0)$. For the last, the  covering is two-fold, and the covering map is nothing but a projection onto a quotient by $\Z_2$. It basically coincides with the statement of \refT{bihol} because the unordered sets of $h_1$ $\tau_2$-invariant pairs of points of the Riemann surface  $\Sigma$ are nothing but points of the variety ${\rm Sym}^{h_1}\Sigma_2$.
\end{proof}
As it was noticed in the Introduction, \refT{birat} immediately follows from Theorem \ref{T:bihol}. Indeed, it has been already proven that the equivalence maps in \refT{birat} are biholomorphic on some open dense subsets. We still need to prove that closures of graphs of those maps are analitic, in the sense that they are sets of zeroes of holomorphic functions. The last is obvious, since by \refL{inver} (essentially, by the Riemann theorem) such function is given by $F(P)=\theta(\A(P)-\epsilon^{-1}\phi+\epsilon^{-1}\Delta)$, and by its continuation onto ${\rm Sym}^{h_1}\Sigma_2$.

In applications to integrable systems, in the cases $1^\circ$, $3^\circ$ one can use the Jacobi inversion, because the trajectories linearize on the Jacobian of $\Sigma_2$.

%%%%%%%%%%%%%%%%%%%%%%%%%%%%%%%%%%%%%%%%%%%%%%%
%%%%%%%%%%%%%%%%%%%%%%%%%%%%%%%%%%%%%%%%%%%%%%%
\section{$\theta$-functional formula for symmetric functions of zeroes} \label{S:theta-form}

Here we address the problem of effective reversion of the Abel--Prym map in the case of two involutions of the first type. The solution proposed here relies on the fact that the transform $\A(P)$, as well as the function $F(P)=\theta(\A(P)-\phi)$, and the set of its zeroes, are invariant with respect to the involution $\tau_2$, for any $\phi$.
\begin{lemma}\label{L:tau2inv}
The transform $\A$ is invariant with respect to the involution $\tau_2$.
\end{lemma}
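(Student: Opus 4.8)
The plan is to derive everything from the defining property of a pair of the first type: every normalized Prym differential $\w_1,\dots,\w_{h_1}$ (with respect to $\tau_1$) is $\tau_2$-invariant, $\tau_2^{*}\w_j=\w_j$. Since, modulo the period lattice $\Z(E,\Pi)$ and independently of the base point, $\A(\tau_2P)-\A(P)=\int_{P}^{\tau_2P}\overline{\w}$, it suffices to analyse this period of a path. Fixing a path $c$ from $P$ to $\tau_2P$, the image $\tau_2(c)$ runs from $\tau_2P$ back to $P$, so $c\ast\tau_2(c)$ is a $1$-cycle, and by invariance of the integral under a change of variables together with $\tau_2^{*}\w_j=\w_j$,
\[
 \oint_{c\ast\tau_2(c)}\w_j=\int_{c}\w_j+\int_{c}\tau_2^{*}\w_j=2\int_{c}\w_j\in\Z(E,\Pi).
\]
Hence $\A\circ\tau_2-\A$ takes values in the $2$-torsion of $Prym_1(\Sigma)$ and, being continuous in $P$ on the connected curve $\Sigma$, equals a constant $c_0$. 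Equivalently: as $\tau_2^{*}$ is the identity on the space of Prym differentials, the automorphism of $Prym_1(\Sigma)$ it induces is the identity on the tangent space at $0$ and fixes $0$, hence is the identity; so the only possible defect in $\tau_2$-equivariance of $\A$ is the translation $c_0=\A(\tau_2\gamma_0)$.

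The next step is to show $c_0=0$. The clean case is when $\tau_2$ has a fixed point $q$ — by \refL{k1t} this is exactly cases $1^{\circ}$ and $3^{\circ}$. Then I would route the path as $c=c_1\ast\tau_2(c_1)^{-1}$, with $c_1$ from $P$ to $q$ and $\tau_2(c_1)^{-1}$ from $q=\tau_2q$ to $\tau_2P$; using $\tau_2^{*}\w_j=\w_j$ one gets $\int_{c}\w_j=\int_{c_1}\w_j-\int_{c_1}\tau_2^{*}\w_j=0$, so $\A(\tau_2P)=\A(P)$ on the nose. The same conclusion is visible in the quotient picture: being $\tau_2$-invariant, the $\w_j$ descend to holomorphic $1$-forms $\eta_j$ on $\Sigma_2=\Sigma/\tau_2$ with $\pi_2^{*}\eta_j=\w_j$ ($\pi_2$ the quotient map), so $\A$ factors through $\pi_2$ modulo $\Z(E,\Pi)$, and when $\pi_2$ is ramified $(\pi_2)_{*}\colon H_1(\Sigma,\Z)\to H_1(\Sigma_2,\Z)$ is surjective, which forces the residual $\overline{\eta}$-period to land in $\Z(E,\Pi)$. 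In those two cases, moreover, $h_1=g_2$ by \refC{h1=g2}, so the $\eta_j$ are a full normalized basis on $\Sigma_2$ and $\A$ is literally the Abel map of $\Sigma_2$ precomposed with $\pi_2$.

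The genuine obstacle is the fixed-point-free case, i.e.\ cases $2^{\circ}$ and $4^{\circ}$, where $(\pi_2)_{*}$ has index $2$: the loop obtained by projecting $c$ to $\Sigma_2$ does not lift to a $1$-cycle of $\Sigma$, and one must decide whether its $\overline{\eta}$-periods still lie in $\Z(E,\Pi)$. In case $4^{\circ}$ I expect this to go through because $h_1=g_2-1<g_2$, so the $\eta_j$ do not span $H^0(\Sigma_2,\Omega^1)$ and the offending loop can be corrected by a cycle with vanishing $\overline{\eta}$-periods, giving $c_0=0$; in the case $(4,0)$ it fails, $c_0$ is a nonzero $2$-torsion point, and this is exactly why in \refT{bihol} the Abel--Prym map descends there only to a double cover of $Prym_1(\Sigma)$. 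Thus the case-free, and only genuinely needed, content is the invariance $\tau_2^{*}\w_j=\w_j$ of each Prym differential, from which the $\tau_2$-invariance of $F(P)=\theta(\A(P)-\phi)$ on a $\tau_2$-stable fundamental polygon $\Sigma'$ follows at once; the lattice bookkeeping in the two unramified cases is the part I expect to be most delicate.
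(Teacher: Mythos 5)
Your opening computation is exactly the paper's proof of this lemma (the paper writes $\int_P^{\tau_2P}\overline{\w}=\int_{\tau_2P}^{P}\tau_2^{*}\overline{\w}=-\int_P^{\tau_2P}\overline{\w}$ and concludes the integral vanishes), and you are right to point out that, taken literally, this identifies integrals over two \emph{different} paths from $P$ to $\tau_2P$ ($c$ and $\tau_2(c)^{-1}$) and therefore only yields $2\bigl(\A(\tau_2P)-\A(P)\bigr)=0$ in $Prym_1(\Sigma)$. Your reduction of the defect to a single $2$-torsion constant $c_0=\A(\tau_2\ga_0)$, and the routing of the path through a fixed point of $\tau_2$ in cases $1^{\circ}$ and $3^{\circ}$ of \refL{k1t}, are correct and sharpen the published argument.

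But as a proof of the lemma your text stops short precisely where the issue lies: the fixed-point-free cases $2^{\circ}$ and $4^{\circ}$. In case $4^{\circ}$ you only ``expect this to go through''; the step ``the offending loop can be corrected by a cycle with vanishing $\overline{\eta}$-periods'' is not an argument --- you would have to exhibit such a cycle in the nontrivial coset of $(\pi_2)_{*}H_1(\Sigma,\Z)$, and $h_1<g_2$ by itself does not produce one. Worse, in case $2^{\circ}$ you assert $c_0\ne 0$, i.e.\ you assert the negation of the statement you are proving. That assertion rests on a misattribution: the two-fold covering in \refT{bihol} for branching type $(4,0)$ comes from \refL{inver}, namely from the map $e\mapsto\tilde e$ doubling the coordinates $e_j$, $j=g_1+1,\dots,h_1$ (a consequence of the factor $\tfrac12$ in \refE{Rim_matr}, i.e.\ of the $\tau_1$-structure of the Prym lattice), and has nothing to do with a translation of $\A$ by a point of order two. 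Note also that if $c_0$ were a half-period not in $\Z(E,\Pi)$, then $F(\tau_2P)=\theta(\A(P)-\phi+c_0)$ would be a theta function with a shifted characteristic, its zero divisor would differ from that of $F$, and \refP{atR} --- hence the whole inversion scheme, including the worked $SL(2)$, genus $2$ example, which is exactly of type $2^{\circ}$ --- would collapse. So your proposal is not a variant proof but an incomplete one, and in case $2^{\circ}$ it contradicts the lemma. To close it along your own lines you must show $\int_{\ga_0}^{\tau_2\ga_0}\overline{\w}\in\Z(E,\Pi)$ in the two unramified cases.
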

\begin{proof}
By definition,
\[
  \A(\tau_2P)=\int\limits_{Q_0}^{\tau_2P}\w = \int\limits_{Q_0}^{P}\w + \int\limits_{P}^{\tau_2P}\w = \A(P)+ \int\limits_{P}^{\tau_2P}\w,
\]
where $Q_0$ is a base point of the transform, $\w$ is the column formed by the base Prym differentials. By invariance of an integral with respect to a change of variables (in our case, given by $\tau_2$) $\int\limits_P^{\tau_2P}\w=\int\limits_{\tau_2P}^{P}\tau_2^*\w$. The relations $\tau_2^*\w=\w$ and $\int\limits_{\tau_2P}^{P}\w=-\int\limits_{P}^{\tau_2P}\w$ imply $\int\limits_{P}^{\tau_2P}\w=0$, hence $\A(\tau_2P)=\A(P)$.
\end{proof}
Let $\phi\in isoPrym_1(\Sigma)$ then $\A^{-1}(\phi)=P_1+\ldots+P_{2h_1}$. We will assume these points to be numbered in such way that $\tau_2(P_k)=P_{h_1+k}$, $k=1,\ldots,h_1$. Symmetric functions of  $P_1,\ldots,P_{h_1}$ are well-defined functions of $\phi$. It is our goal to find out a theta function formulae for a full independent set of such functions. We basically follow ideas by B.Dubrovin in \cite{Dubr_theta} (going back to Riemann), developed in relation to explicit reversion of the Abel map.

For any meromorphic $\tau_2$-invariant function $f$ on $\widehat\Sigma$ we consider $\s_f(\phi)=\sum_{P\in |D|} f(P)$ where $D=P_1+\ldots+P_{2h_1}$, $|D|=\rm{support}(D)$. Since $|D|=P_1+\ldots+P_{h_1}+\tau_2(P_1)+\ldots+\tau_2(P_{h_1})$, we have $\s_f(\phi)=2\sum_{i=1}^h f(P_i)$. Assuming $f$ to have no pole except at infinity we begin with the following relation close to the relation by Dubrovin (\cite[Eq. (11.23)]{Dubr_RegCh}, \cite[Eq. (2.4.29)]{Dubr_theta})  (it is the only difference with that relation that we take account of the above introduced multipliers $\epsilon_k$):
\begin{equation}\label{E:Dubr}
  \s_f(\phi)=c-\sum_{Q:f(Q)=\infty}\res_Q fd\ln F_\phi,
\end{equation}
where $c$ is constant in $\phi$, $F_\phi(P)=\theta(\A(P)-\epsilon^{-1}(\phi-\Delta))$, $(\epsilon^{-1}\psi)_j=\epsilon^{-1}_j\psi_j$, $j=1,\ldots,h_1$. For completeness, we reproduce the proof due to B.Dubrovin here, with minor changes due to the fact that we deal with Prymians and Prym theta functions.

It follows from the theorem on residues and \refL{inver} that
\[
   \s_f(\phi)=\frac{1}{2\pi i}\oint_{\partial\Sigma'}f(P)d\ln F_\phi(P) -\sum_{Q:f(Q)=\infty}\res_Q fd\ln F_\phi
\]
Observe that the first summand includes, in particular, the residues at the poles of $f(P)$, while the second summand compensates them, so that there is only the sum of the residues at zeroes of the function $F_\phi(P)=\theta(\A(P)-\epsilon^{-1}(\phi-\Delta))$ in remainder. Writing down the integral over the boundary of $\Sigma'$ as the sum of integrals along the cuts, similarly to the proof of Lemmas  \ref{L:num_zero} and \ref{L:inver}, we obtain
\begin{equation}\label{E:calc15}
 \frac{1}{2\pi i}\oint_{\partial\Sigma'}f(P)d\ln F_\phi(P) = \frac{1}{2\pi i}\sum_{j=1}^{\widehat{g}}\left(\oint_{a_j} + \oint_{b_j}         \right)f(P)(d\ln F_\phi(P)^+ - d\ln F_\phi(P)^-).
\end{equation}
On the right hand side, the $f(P)$ can be taken out of the brackets, for the reason it is a function on $\Sigma$, hence its evaluations on $a_j$ and on $a_j^{-1}$ (on $b_j$ and $b_j^{-1}$, resp.) coincide.

Then, making use of \refE{Rim_matr}, similarly to the proof of \refL{num_zero}, we conclude that the right hand side of the relation \refE{calc15} is equal to $\frac{1}{2\pi i}\sum_{j=1}^{\widehat{g}}\epsilon_j\oint_{a_j}f\w_j$ (where we set $\epsilon_{j+h_1}=\epsilon_j$ for $j=1,\ldots,g_1$), which does not depend on $\phi$. Thus we obtained \refE{Dubr}.

Below, we assume that $\Sigma$ is a branch covering of the Riemann sphere,  $\pi:\Sigma\to\mathbb{P}^1$ is the covering map, and $\pi$ is $\tau_2$-invariant. These assumptions are fulfilled in many cases, in particular for spectral curves of Hitchin systems. Let $x_i=\pi(P_i)$, $i=1,\ldots,h_1$. We take $f(P)=x^k$ where $x=\pi(P)$. Denote $\s_f$ by $\s_k$, then
\begin{equation}\label{E:sigm}
  \s_k(\phi)=x_1^k+\ldots +x_h^k,
\end{equation}
i.e. $\s_k(\phi)$ is the $k$th Newton polynomial in $x_1,\ldots ,x_{h_1}$. The relation \refE{Dubr} can be written down as
\begin{equation}\label{E:Dubr_k}
  \s_k(\phi)=c-\sum_{Q\in\pi^{-1}(\infty)}\res_Q x^kd\ln\theta(\mathcal{A}(P)-\epsilon^{-1}(\phi-\Delta)).
\end{equation}
Since $(d\mathcal{A})_i=\w_i$, we obtain that
\[
  d\ln\theta(\mathcal{A}(P)-\epsilon^{-1}(\phi-\Delta))=\sum_{i=1}^{h_1} (\partial_i\ln\theta(\mathcal{A}(P)-\epsilon^{-1}(\phi-\Delta))\w_i
\]
where $\w_i$ are given by the relations \refE{dprym}, $\partial_i$ stays for the derivative in the $i$th argument ($i=1,\ldots,6$). We choose an arbitrary point $Q_0\in\pi^{-1}(\infty)$ as a base point of the Abel--Prym transform. In a neighborhood of $Q_0$ we can consider $\mathcal{A}(P)$ as a small quantity, and expand  $(\ln\theta(\mathcal{A}(P)-\epsilon^{-1}(\phi-\Delta)))_i$ into a Tailor series. What we need to do after that, is to find out the sum of the terms of order $z^{2k-1}$ in the just obtained expansion, where $z$ is a local parameter in the neighborhood of the point $x=\infty$ ($x=z^{-2}$). Obviously, having been multiplied by $x^k=z^{-2k}$, this sum will give the required residue in \refE{Dubr_k}. As a result, we obtain the contribution of the point $Q_0$ into the expression \refE{Dubr_k} for $\s_k(\phi)$:
\begin{equation}\label{E:theta_sigma}
    \sum_{i=1}^{h_1}\sum_{1\le |j|\le 2k-1}
    {\varkappa_{ik}^j}D^j\partial_i\ln\theta(-\epsilon^{-1}(\phi-\Delta)),
\end{equation}
where $j=(j_1,\ldots,j_{h_1})$, $|j|=j_1+\ldots+j_{h_1}$,
\begin{equation}\label{E:theta_kappa}
  D^j=\frac{1}{j_1!\ldots j_h!}\frac{\partial^{|j|}}{\partial\phi_1^{j_1}\ldots\partial\phi_{h_1}^{j_{h_1}}},
  \quad {\varkappa_{ik}^j}=\sum_{l_i+\sum_{s=1}^{h_1}\sum_{p=1}^{j_s} l_{sp}=2k-1} \varphi_i^{(l_i)}\prod_{s=1}^{h_1}\prod_{p=1}^{j_s}\frac{\varphi_s^{(l_{sp}-1)}}{l_{sp}},
\end{equation}
$l_s$ and $\varphi_s^{(l_s)}$ are defined from the relation $\A_s(P)=\sum_{l_s\ge 1}\frac{\varphi_s^{(l_s)}}{l_s}z^{l_s}$ ($P=P(z)$).

Computation of the contribution of an arbitrary point $Q\in\pi^{-1}(\infty)$  differs in that we take a Tailor expansion in the small parameter $\A(P)-A(Q)$ which only results in addition of $\A(Q)$ to the argument of the theta function in \refE{theta_sigma}. Also the coefficients $\varphi_s^{(l_s)}$  and  $\varkappa_{ik}^j$ in \refE{theta_kappa} become dependent of $Q$  (and  acquire the index $Q$). Thus,
\begin{equation}\label{E:sigma_k_result}
    \s_k(\phi)=c-\sum_{Q\in\pi^{-1}(\infty)}\sum_{i=1}^{h_1}\sum_{1\le |j|\le 2k-1} {\varkappa_{Q,ik}^jD^j}\partial_i\ln\theta(\A(Q)-\epsilon^{-1}(\phi-\Delta)),
\end{equation}
The functions $\s_k(\phi)$, $k=1,\ldots,h_1$ give a full set of symmetric functions of $x$-coordinates of the points in $\A^{-1}(\phi)$. They determine $x_1,\ldots,x_{h_1}$ up to a permutation.
%%%%%%%%%%%%%%%%%%%%%%%%%%%%%%%%%%%%%%%%%%%%%%%
%%%%%%%%%%%%%%%%%%%%%%%%%%%%%%%%%%%%%%%%%%%%%%%%%%%%%%%%%%%%%
\section{Spectral curves of Hitchin systems}\label{S:Hitch}
For the classical definition of Hitchin systems we refer to \cite{Hitchin,BorSh,Sh_Bin}. Below, we use an alternative definition by means of giving the spectral curve and the Poisson bracket of a system in terms of separating variables \cite{Sh_FAN_2019,BorSh,Sh_Bin}. For Hitchin systems on hyperelliptic Riemann surfaces, with the structure groups $G=Sp(2n)$, $G=SO(2n)$ spectral curves are given by systems of equations in $\C^3$, of the form
\begin{equation}\label{E:spec_Hit}
  R(\l,x,y)=0,\quad y^2=P_{2g+1}(x),
\end{equation}
where
\begin{equation}\label{E:spec_Hit1}
   R(\l,x,y)=\l^{2n}+\sum_{j=1}^nr_j(x,y)\l^{2(n-j)}
\end{equation}
and
\begin{equation}\label{E:spec_Hit2}
   r_j(x,y)=\sum_{k=0}^{2j(g-1)} H^{(0)}_{jk} x^k +\sum_{s=0}^{(2j-1)(g-1)-2} H^{(1)}_{js}yx^s.
\end{equation}
In the case $G=Sp(2n)$ the spectral curve is generically nonsingular. In the case $G=SO(2n)$ the coefficient $r_n$ is a full square which means that, first, there are relations between $H^{(1)}_{ns}$, and, second, the curve has singularities in a generic position. In this case, here and below, we use  its normalization by default. We do not consider the systems with the structure group $SO(2n+1)$ here, for the reason they are locally isomorphic to the systems with the structure group $Sp(2n)$ \cite{Hitchin}. As for the systems with the structure group $GL(n)$, we refer to \cite{Sh_FAN_2019,BorSh,Sh_Bin}. Briefly speaking, their spectral curves can be obtained by plugging $n$ instead $2n$, and  $j$ instead $2j$ in \refE{spec_Hit}--\refE{spec_Hit2}, where $n$ is arbitrary, $j=1,\ldots,n$.

A curve of the form \refE{spec_Hit}--\refE{spec_Hit2} is invariant with respect to the holomorphic involution $\tau_1:\l\to -\l$. In the case $G=Sp(2n)$, the base of  its Prym differentials is given by the differentials $\frac{\partial R/\partial H_{jk}^{(0)}}{\partial R/\partial\l}\frac{dx}{y}$, $\frac{\partial R/\partial H_{js}^{(1)}}{\partial R/\partial\l}\frac{dx}{y}$ where $j,k,s$ vary within the same limits as in \refE{spec_Hit1}, \refE{spec_Hit2}. In the case $G=SO(2n)$, accordingly $r_n(x,y)=q(x,y)^2$, the list of base differentials is the same for $j<n$, while for $j=n$ it is formed by the differentials $\frac{x^kqdx}{R_\l'\, y } \ (0\le k\le n(g-1))$ and $\frac{x^sqdx}{R_\l'}\ (0\le s\le {(n-1)(g-1)-2})$.

Let $h_1=\dim Prym_1$ be dimension of the Prymian of the curve with respect to the involution $\tau_1$. Then $h_1=(\dim G)(g-1)$ \cite{Hitchin}. In our case, this relation can be easily obtained by means of straightforward counting the base Prym differentials. Separating variables of the system are given by sets of triples of complex numbers $(x_i,y_i,\l_i)$ ($i=1,\ldots,h_1$), each one satisfying the relation $y_i^2=P_{2g+1}(x_i)$  \cite{Sh_FAN_2019,BorSh,Sh_Bin}. The Poisson bracket in the separating variables is given by the relations $\{\l_i,x_j\}=y_i\d_{ij}$.

Observe that the spectral curves of Hitchin systems are coverings of $\mathbb{P}^1$, in a natural way ($\pi : (x,y,\l)\to x$).
If $H^{(1)}_{js}=0$ for all $j,s$ then the curve \refE{spec_Hit}--\refE{spec_Hit2} possesses also the involution $y\to -y$. In this case we set $\tau_2:\l\to -\l,y\to -y$. For only two classical groups, on only genus 2 Riemann surfaces the corresponding Hitchin systems possess the property that $H^{(1)}_{js}$ vanish for all  $j,s$. These groups are $SL(2)$ and $SO(4)$. For all other systems that condition is fulfilled only for some degenerated curves. We shall consider spectral curves of the form \refE{spec_Hit}--\refE{spec_Hit2} fulfilling the property, i.e.  invariant with respect to the involutions $\tau_1$, $\tau_2$, and, moreover, such that this pair of involutions is of the first type. In this case, all results of the previous sections are applicable. In particular, relations \refE{sigma_k_result} enable us to find out the coordinates $x_1,\ldots,x_{h_1}$ of the preimage of a point $\phi\in isoPrym_1$ in terms of Prym theta functions, up to a permutation. Then we can find out the corresponding $y_i$, $\l_i$ from the system of equations \refE{spec_Hit}--\refE{spec_Hit2}, but not uniquely. However, if we fix a certain branch of the covering $\pi : \Sigma\to\mathbb{P}^1$ (for every $i=1,\ldots,h_1$) then by taking the solution for $y_i$, $\l_i$ on that branch, the reversion procedure can be made unambiguous (outside the branch points). These considerations are quite enough to locally construct a trajectory of a Hitchin system with a given Hamiltonian, and given initial condition. Indeed, giving the initial condition, i.e. a point
$\ga_0=\{(x_1^0,y_1^0,\l_1^0),\ldots,(x_{h_1}^0,y_{h_1}^0,\l_{h_1}^0) \}$ outside the branch points, determines a set of branches of the spectral curve. Let $\phi_0=\A(\ga_0)$. On $isoPrym_1$, the trajectory has the form $\phi=It+\phi_0$ where the (vector valued) coefficient $I$ depends on the Hamiltonian only. By pluggin it into the just constructed (in terms of the Prym theta function) map $\ga=\A^{-1}(\phi)$, we resolve the problem in a neighborhood of the point~$\ga_0$.
%%%%%%%%%%%%%%%%%%%%%%%%%%%%%%%%%%%%%%%%%%%%%%%
%%%%%%%%%%%%%%%%%%%%%%%%%%%%%%%%%%%%%%%%%%%%%%%
\section{Examples of curves with a pair of involutions of the first type
%, and their relation to integrable systems
}\label{S:Examples}
In this section, we will show with examples that all cases of \refL{k1t} indeed occur. All our examples are related to spectral curves of Hitchin systems with structure groups $SL(2)$, $SO(4)$, $Sp(4)$, except for one example, namely for the spectral curve of the Kovalewski system.

\begin{example}\label{E:SL2gen2}
The Hitchin system with the structure group $SL(2)$ on a genus 2 curve. The spectral curve $\Sigma$ is given by the system of equations
\begin{equation}\label{E:SL2_1}
  R(\l,x)=\l^2+r(x)=0,\quad y^2=P_5(x)
\end{equation}
where $r(x)=H_0+H_1x+H_2x^2$. As above, let $\widehat{g}$ stay for the genus of the curve $\Sigma$. For the system in question  $\widehat{g}=5$, $\dim Prym_1=3$ \cite{BorSh}. A full set of Prym differentials (from now on we assume them to be holomorphic, by default) is given by the list $\frac{dx}{\l y}$, $\frac{xdx}{\l y}$, $\frac{x^2dx}{\l y}$. They are obviously invariant with respect to $\tau_2$. Vice versa, the $\tau_1$-invariant differentials given by $\frac{dx}{y}$ and $\frac{xdx}{y}$ are antiinvariant with respect to $\tau_2$. This implies $h_1=3$, $h_2=2$. Since $h_1>h_2$, the case $2^\circ$ of \refL{k1t} takes place. According to \refT{birat} $Prim_1\simeq Jac(\Sigma_2)/\Z_2$, and according to \refL{k1t} $g_2=3$. The above obtained results give solutions of the system in terms of Prym theta functions in the dimension 3. The system has been a subject of the works \cite{Gaw,Previato,Geemen_Jong} which finally resulted in its solution in theta functions. The proposed here derivation of solutions from the general results of sections \ref{S:ogr}, \ref{S:theta-form} is all-sufficient, much shorter, and we believe, more direct and transparent. However, related algebraic-geometric and field theoretic results of the works \cite{Gaw,Previato,Geemen_Jong} remain outside the scope of our approach. Observe also that it is claimed in \cite{Gaw} that invariant tori of the system in question are Jacobians of genus 3 hyperelliptic curves.
\end{example}
\begin{example}\label{Ex:SO4_2}
The Hitchin system with the structure group $SO(4)$ on a genus 2 curve. According to \cite{BorSh} the spectral curve of the system is given by equations
\begin{equation}\label{E:SO4_2}
  \l^4+p(x)\l^2+q^2(x)=0,\quad y^2=P_5(x)
\end{equation}
where $p$ and $q$ are quadratic polynomials. A base of holomorphic Prym differentials on (the normalization of) $\Sigma$ is given by the following list \cite{BorSh}:
\begin{equation}\label{E:dprym}
\begin{aligned}
       &\omega^{(0)}_i = \frac{x^{i-1}q(x)dx}{y\l(4\l^2 + 2p(x))},~ i = 1, 2, 3,\\
       &\omega^{(1)}_i = \frac{\l^2x^{i-4}dx}{y\l(4\l^2 + 2p(x))},~ i = 4, 5, 6.
\end{aligned}
\end{equation}
It is known \cite{Hitchin} that the spectral curve of a Hitchin system with the structure group $SO(2n)$ is unramified over its quotient by the involution $\l\to -\l$, i.e. over $\Sigma_1$ in our case. For the systems with the group $SO(4)$ on hyperelliptic curves $y^2=P_5(x)$ (of genus 2) a genus of the spectral curve is equal to 13 (i.e. odd), while the involutions $\tau_1:\l\to -\l$, $\tau_2 :\l\to -\l, y\to -y$ form the pair of the first type \cite{Sh_SO4}, as it easily follows from the form of the Prym differentials \refE{dprym}. Hence either the case $3^\circ$, or the case $4^\circ$ of \refL{k1t} takes place.

In both cases by $n_1=0$ we have $\widehat{g}=2g_1-1$, i.e. $g_1=7$, $h_1=\widehat{g}-g_1=6$.

In the case $3^\circ$ it would be $h_2=g_1=7$. But in fact we have 6 holomorphic differentials on $\Sigma_1$, which become $\tau_1$-symmetric and $\tau_2$-antisymmetric after pull back to $\Sigma$, and one more differential symmetric with respect to both involutions. Due to this behavior with respect to the involutions, all 13 differentials are linear independent. Hence $h_2=6$, and we have the case $4^\circ$ of the Lemma.

An explicit form of the above mentioned basis holomorphic differentials on~$\Sigma_1$ is as follows.  The curve $\Sigma_1$ is given by the equations $R_1=0, y^2=P_5$ where $R_1=\mu^2+\mu p+q$, $\mu=\l^2$. If $p=H_0+H_1x+H_2x^2$, $q=H_3+H_4x+H_5x^2$ then we have 6 holomorphic differentials of the form $\frac{\partial R_\s/\partial H_j}{\partial R_\s/\partial \mu}\frac{dx}{y}$, $j=0,1,\ldots,5$:
\begin{equation}\label{E:dsym}
\begin{aligned}
       &\omega^{(0)}_i = \frac{x^{i-1}q(x)dx}{y(2\mu + p(x))},~ i = 1, 2, 3,\\
       &\omega^{(1)}_i = \frac{\mu x^{i-4}dx}{y(2\mu + p(x))},~ i = 4, 5, 6.
\end{aligned}
\end{equation}
They are obviously symmetric with respect to $\tau_1$, and antisymmetric with respect to~$\tau_2$. Besides, there is one more differential $dx/\mu$ symmetric with respect to both involutions. At infinity $\l\sim z^{-2}$, hence $\mu\sim z^{-4}$. Further on, $x\sim z^{-2}$, $dx\sim z^{-3}dz$, and $dx/\mu\sim zdz$, i.e. it is holomorphic. Observe that $\mu=0$ is a smooth point of the curve $\Sigma_1$ unless $p$ and $q$ have common zeroes, and it is not a branch point. Hence the differential $dx/\mu$ is holomorphic at $\mu=0$ due to Proposition 3.1 \cite{BorSh}.

Results of sections \ref{S:ogr}, \ref{S:theta-form} give solution of the system in Prym theta functions in the dimension 6. For a detailed presentation of resolving the system we refer to \cite{Sh_SO4}, where it was solved for the first time.
\end{example}
%Наконец, пример, когда имеет место случай $3^\circ$ леммы, снова связан с $SL(2)$ и базовой кривой рода 2.
%\begin{example}\label{Ex:SL2_3}
%Пусть кривая $\widehat{\Sigma}$ задана системой уравнений $\l^2+q(x)=0, y^2=P_5(x)$, где $q(x)=(x-a)^2$. Накрытие $\widehat{\Sigma}\to\Sigma$ не разветвлено (все точки с $\l=0$ -- особые), и формула Римана--Гурвица для рода нормализации $\widehat{\Sigma}$ даёт $\widehat{g}=3$. Кривая $\widehat{\Sigma}_\s$ задаётся системой уравнений $\mu+q(x)=0, y^2=P_5(x)$ (где $\mu=\l^2$), и очевидно совпадает с $\Sigma$ (первое уравнение не является ограничением). Таким образом накрытие $\widehat{\Sigma}\to\widehat{\Sigma}_\s$ не разветвлено и $n_1=0$, откуда, в свою очередь, $\widehat{g}=2g_1-1$, $g_1=2$, $h_1=1$. Раз $g_1=2$, то два дифференциала $\frac{dx}{y}$ и $\frac{xdx}{y}$, поднятые с базы, линейно независимы и симметричны относительно $\s$. Вместе с одним кососимметричным дифференциалом на $\widehat{\Sigma}$ (напомним, $h_1=1$) они образуют базис голоморфных дифференциалов на $\widehat{\Sigma}$. Но они, очевидно, кососимметричны относительно $\tau$, следовательно $h_2=2$. Так как $h_1\ne h_2$, то имеет место случай $3^\circ$, а не $4^\circ$.
%\end{example}
\begin{example}\label{Ex:Kowalevski}
Kovalewski system. According to \cite[\S 5.13]{BBT}, the spectral curve of the system is a normalization of a flat algebraic curve  of the form $f(\mu^2,\l^2)=0$, its genus is equal to 5. The involutions are as follows: $\tau_1:\l\to -\l$, $\tau_2: \mu\to -\mu$.
The curve $\Sigma_1$ (often denoted by $C$ for the Kovalewski problem) is of genus 3, $\Sigma_2$ is of genus 2, i.e. $h_1=g_2=2$ and $h_2=g_1=3$. The Poisson bracket has the form $\{\l_j,\mu_k\}=-i\mu_k\d_{jk}$ in the separation variables \cite[Ch. 5, \S 4]{Ts}. Hence, we have the case $3^\circ$ of the Lemma ($h_1<h_2$). In particular, $\Sigma_2$ is a hyperelliptic genus 2 curve, and by \refT{birat} $Prym_1\cong Jac_2$. The results of \cite[Sect. 4.3]{Sh_Bin} (similar to the results of \refS{theta-form}, but for Jacobians) provide solutions in theta functions of genus 2. It completely meets the original Kovalewski solution given in the same terms.
\end{example}
%%%%%%%%%%%%%%%%%%%%%
\begin{example}\label{Ex:SL2_gen3}
Spectral curve of an $SL(2)$ Hitchin system on a genus~3 Riemann surface:
\begin{equation}\label{E:SL2_1}
  R(\l,x)=\l^2+r(x)=0,\quad y^2=P_7(x).
\end{equation}
In general $r(x)=\sum_{i=0}^{4}H_ix^i+H_5y$ \cite{Sh_FAN_2019,BorSh}, but we assume that
\begin{equation}\label{E:SL2_2}
 r(x)= H_0+H_1x+H_2x^2+H_3x^3.
\end{equation}
In a generic position, the curve \refE{SL2_1}, \refE{SL2_2} is non-singular, since the equations for singular points
\[
  R'_\l=2\l=0,\quad R'_x=H_1+2H_2x+3H_3x^2=0.
\]
descend to the equations $r(x)=0$ and $r'(x)=0$, which are incompatible.

The equations for branch points are as follows: $\l=0$, which implies $r(x)=0$. The last equation has 3 roots in general. Due to the symmetry $y\to -y$ we obtain 6 branch points. By the Riemann--Hurwitz formula for a 2-fold covering, and $g=3$ we have
\[
    2\widehat{g}-2= 2(2\cdot 3-2)+ 6
\]
which implies $\widehat{g}= 8$. The total number of linear independent differentials is equal to 8:
\begin{itemize}
\item
4 differentials of the form $\frac{\partial R/\partial H_j}{ R'_\l} \frac{dx}{y}$, $j=0,1,2,3$: $\frac{1}{\l}\frac{dx}{y}$, $\frac{x}{\l}\frac{dx}{y}$, $\frac{x^2}{\l}\frac{dx}{y}$, $\frac{x^3}{\l}\frac{dx}{y}$

\item
3 differentials pulled back from the base curve: $\frac{dx}{y}$, $\frac{xdx}{y}$, $\frac{x^2dx}{y}$,

\item
and the differential $\frac{dx}{\l}$.
\end{itemize}
\noindent
The orders of the differentials in the first quadruple, in a local coordinate at infinity, are equal to $z^7dz$, $z^5dz$, $z^3dz$, $zdz$, respectively. For the second triple they are as follows: $z^4dz$, $z^2dz$, $dz$, and for the last one again $dz$ (we use here that $x\sim z^{-2}$, $\l\sim z^{-3}$). Linear independence of the first seven differentials follows by that their orders at infinity are different, while linear independence between them, and the last differential $\frac{dx}{\l}$ follows from their different behaviour with respect to the symmetries $\tau_1$ and $\tau_2$: the first group is skew-symmetric with respect to $\tau_1$, and symmetric with respect to $\tau_2$, while the second group vice versa. As for the last differential, it is skew-symmetric with respect to both involutions. For the holomorphy we refer to  \cite{Sh_FAN_2019,BorSh}.

Presence of a basis differential skew-symmetric with respect to both involutions means that $\tau_1$, $\tau_2$ is not a pair of the first type in general. Assume that the polynomial $r(x)$ has one double zero, and one simple zero:
\[
 r(x)=a(x-b)^2(x-c),\quad b\ne c.
\]
Generically such curve has two singular points, and two branch points. Indeed, the singular points satisfy to the system of equations $\l=0$, $r'(x)=0$. The last can be written down as  $a(x-b)(3x-b-2c)=0$. In general, from the two its solutions only one, namely $x=b$, satisfies to the equation of the curve $\Sigma$. Taking account of the symmetry $y\to -y$ we obtain two singular points. The branch points can be found out from the equations $\l=0$, $r(x)=0$. There appears only one solution different from singular points, namely $x=c$. By the symmetry in $y$, we obtain two branch points: $x=c,y=\pm \sqrt{P_7(c)}$.

By the Riemann--Hurwitz formula we count a genus of normalization of the covering curve:
\[
    2\widehat{g}-2= 2(2\cdot 3-2)+ 2,
\]
which gives $\widehat{g}=6$. Hence, there are six basis holomorphic differentials. For sure, the differentials with the asimptotics $z^5dz$, $z^3dz$, $zdz$ from the first group are among them. Throwing away of any of them would result in an incomplete system in general. It is sufficient to add the differentials pulled back from the base in order to obtain a full system of holomorphic differentials:
\[
  \frac{xdx}{\l y},\quad\frac{x^2dx}{\l y},\quad\frac{x^3dx}{\l y}, \quad \frac{dx}{y},\quad\frac{xdx}{y},\quad\frac{x^2dx}{y}.
\]
The first three of them are $\tau_1$-antiinvariant, and invariant with respect to $\tau_2$, while the second triple vice versa. Hence these involutions form a pair of the first type.

This example corresponds to the case $1^\circ$ of \refL{k1t}: an even genus of the covering, and two branch points. The quotient by $\tau_2$ coincides with the base curve itself.
\end{example}
\begin{example}\label{Ex:arb_gen}
We generalize \refEx{SL2_gen3} onto the case of an arbitrary genus base curve (the structure group still is $SL(2)$). The main goal of the example is to show how one can obtain particular solutions of Hitchin systems on an arbitrary genus base curve by means of the results of \refS{theta-form}.

Let $\Sigma$ be the spectral curve, $\widehat{g}=genus(\Sigma)$, $\Sigma_0: y^2=P_{2g+1}(x)$ be a base curve, $\Sigma_{1,2}$, $g_{1,2}$ are as above. In order the covering $\Sigma\to\Sigma_0$ had two branch points, it is necessary, by the Riemann--Hurwitz formula, that $2\widehat{g}-2= 2(2g-2)+ 2$, which implies $\widehat{g}=2g$. In general, for $SL(2)$
\begin{equation}\label{E:r2_sl2}
   r(x,y)=\sum_{k=0}^{2(g-1)} H^{(0)}_{k} x^k +\sum_{s=0}^{g-3} H^{(1)}_{s}yx^s.
\end{equation}
We consider the case $H^{(1)}_{s}=0$, $s=0,\ldots,g-3$, and denote the coefficients $H^{(0)}_{k}$, $k=0,\ldots,2(g-1)$ by $H_k$. Then $r(x)=\sum_{k=0}^{2(g-1)} H_k x^k $. This polynomial has $2(g-1)$ roots. To have two branch points, it is necessary that all roots are double zeroes (they will give simple singular points), except for one. Then the degree of the polynomial must be odd, and we set $H_{2(g-1)}=0$. Thus the curve in question has equation \refE{SL2_1} where
\begin{equation}\label{E:r2_sl2_2}
   r(x)=\sum_{k=0}^{2g-3} H_k x^k .
\end{equation}
The differentials
\[
   \frac{x^sdx}{\l y}\ (s=0,\ldots,2g-3),\quad \frac{x^qdx}{y}\ (q=0,\ldots,g-1)
\]
are holomorphic. From the first group, $g$ differentials with minimal orders at infinity must be left. All differentials of the first group are antisymmetric with respect to $\tau_1$ and symmetric with respect to  $\tau_2$, and in the second group vice versa, i.e. we have obtained the pair of involutions of the first type. This series of examples gives solutions depending on $g$ integrals from $3g-3$ possible. We stress that the relations between the coefficients $H_k$ in the equation of the spectral curve must ensure the multiplicity two for all roots of the polynomial $r(x)$, except for one which must be simple.
\end{example}

We conclude with obtaining some particular solutions of Hitchin systems with the structure group $Sp(4)$ on genus two curves. The corresponding spectral curves give us examples of the cases $1^\circ$ and $2^\circ$  of \refL{k1t}.

Generically, the spectral curve of a Hitchin system with the structure group $Sp(4)$ on a genus two curve is given by the pair of equations \cite{BorSh}
\begin{equation}\label{E:ex_Sp4_1}
  R(\l,x)=0,\quad y^2=P_5(x),
\end{equation}
where
\begin{equation}\label{E:ex_Sp4_2}
\begin{aligned}
    R(\l,x)&=\l^4+\l^2p(x)+q(x), \\
    p(x)&=H_0+H_1x+H_2x^2,\\
    q(x)&=H_3+H_4x+H_5x^2+H_6x^3+H_7x^4+yH_8+xyH_9.
\end{aligned}
\end{equation}
We set $H_8=H_9=0$. We  seek for the cases when the pair of involutions $\tau_1,\tau_2$ belongs to the first type.
\begin{example}\label{Ex:Sp4}
Assume that $\Sigma$ is non-singular, and ramified over $\Sigma_1$ with order (degree of the branch divisor) equal to~4, i.e. the case $2^\circ$ of \refL{k1t} takes place. The system of equations for the branch points is as follows:
\begin{equation}\label{E:branch}
\left \{
\begin{array}{c}
     R'_{\l}(\l, x) = \l(4\l^2 + 2p)=0,\\
     R(\l, x) = 0,\quad y^2=P_5(x).
     \end{array}
\right.
\end{equation}
The solutions $\l=0$, $q(x)=0$ correspond to branch points $\Sigma$ over $\Sigma_1$ while the others correspond to the branch points of  $\Sigma_1$ over the base curve. Taking account of the fact that for every solution of \refE{branch} there is a symmetric one with $y$ of an opposite sign, there can be four branch points over $\Sigma_1$ if $\deg q=2$ only, i.e. $H_6=H_7=0$. The branch points with $\l\ne 0$ satisfy the equation $\l^2=-p/2$, which having been plugged into the equation of the curve gives a degree four equation $p(x)^2=4q(x)$. Since every $x$ is assigned with two values of $y$, and  four values of $\l$, it gives 16 branch points.  The total number of the branch points of the covering $\Sigma\to\Sigma_0$ is equal to 20, which implies $\widehat{g}=15$ by the Riemann--Hurwitz formula. We can take the following differentials as basis Prym differentials with respect to $\tau_1$:
\begin{equation}\label{E:BPrym_Sp4}
  \frac{\l^2x^idx}{\l y(4\l^2+2p)}\ (i=0,1,2);\quad \frac{x^idx}{\l y(4\l^2+2p)}\ (i=0,1,2,3,4)
\end{equation}
( all of them are of the form $\frac{\partial R/\partial H_j}{R'_\l}\frac{dx}{y}$, $j=0,1,\ldots,7$). It is easy to check that they are holomorphic at infinity, it follows from $\l\sim z^{-2}$, $x\sim z^{-2}$, $y\sim z^{-5}$ where $z$ is a local parameter at infinity \cite{Sh_FAN_2019}.

The genus of $\Sigma_1$ can be found out from the relation $2\widehat{g}-2=2(2\cdot g_1-2)+4$, as well as from the relation $2g_1-2=2(2\cdot 2-2)+8$, the last for the genus of the 2-fold covering ${\Sigma}_1\to{\Sigma}_0$ with eight branch points (coming from the 16 branch points with $\l\ne 0$ glued in pairs under projection $\Sigma\to\Sigma_1$). We obtain $g_1=7$ as a result.

Thus, there are seven independent holomorphic $\tau_1$-symmetric differentials on $\Sigma$, hence the differentials \refE{BPrym_Sp4} form a base of holomorphic Prym differentials on that curve. These differentials are invariant with respect to the involution $\tau_2$. For this reason, $\tau_1,\tau_2$ is a pair of involutions of the first type. We have proven that the example in question corresponds to the case $2^\circ$ of \refL{k1t}. However, in this example inversion of the Abel--Prym map can not be used for solution of the integrable system, because the dimension of the isoPrymian is equal to 8 while the number of degrees of freedom of the system is six.
\end{example}
The following example does not have this drawback. The curve $\Sigma$ has two singular points (before normalization), and two branch points over $\Sigma_1$ in this example.
\begin{example}\label{Ex:Sp4_2}
Let $q(x)=H_3+H_4x+H_5x^2+H_6x^3+H_7x^4$ to have one double, and two simple roots (that is one singular point on $\Sigma$, subject to resolution). Thus, there are two branch points with $\l=0$ (i.e. branch points of the covering $\Sigma\to\Sigma_1$). Then, by the Riemann--Hurwitz formula, $2\widehat{g}=2g_1$. The total number of branch points of the covering $\Sigma\to\Sigma_0$ is equal to 18, and $\widehat{g}=14$, which implies $g_1=7$. Thus a base of holomorphic differentials on  $\Sigma$ consists of the seven Prym differentials (which are selected from the eight differentials $\frac{\partial R/\partial H_j}{R'_\l}\frac{dx}{y}$), and of the seven $\tau_1$-symmetric differentials. This corresponds to the case $1^\circ$ of \refL{k1t}, and the dimension of the isoPrymian is equal to the number of degrees of freedom of the system. Indeed, the integrals of the system are as follows: the three roots and the coefficient at the highest degree of the polynomial $q(x)$, and three coefficiients of the polynomial $p$.
\end{example}
If to consider the curve given by equations \refE{ex_Sp4_1}, \refE{ex_Sp4_2} with
\[
   q(x)=H_3+H_4x+H_5x^2+H_6x^3+H_7x^4,
\]
under assumption that all solutions to the system \refE{branch} with $\l=0$ correspond to singular points (i.e. satisfy the equation $R'_x=0$, in addition), then it is easy to see that this amounts in the system of equations $q(x)=0, q'(x)=0$. Hence the singular points are multiple roots of the polynomial $q(x)$. Generically, their multiplicities are equal to 2, hence the polynomial $q$ is a full square of a quadratic polynomial. Thus we returned to the Hitchin system with the structure group  $SO(4)$ on a genus 2 curve (\refEx{SO4_2} above).

%%%%%%%%%%%%%%%%%%%%%%%%%%%%%%%%%%%%%%%%%%%%%%%%%%%%

\bibliographystyle{amsalpha}

\begin{thebibliography}{A}
%%%%%%%%%%%%%%%%%%%%%%%%%%%%%%%%%%%%%%%%
\define\PL{Phys. Lett. B}
\define\NP{Nucl. Phys. B}
\define\LMP{Lett. Math. Phys. }
\define\JGP{JGP}
\redefine\CMP{Commun. Math. Phys. }
\define\JMP{J.  Math. Phys. }
\define\Izv{Math. USSR Izv. }
\define\FA{Funct. Anal. and Appl.}
\def\Pnas{Proc. Natl. Acad. Sci. USA}
\def\PAMS{Proc. Amer. Math. Soc.}
\def\UMNr{Russ. Math. Surv.}

%%%%%%%%%%%%%%%%%%%%%%%%%%%%%%%
%\bibitem{Atiyah}
%M.F.Atiyah, N.Hitchin. \emph{Geometry and physics of monopoles}

\bibitem{BBT}
O.Babelon, D.Bernard, M.Talon. \emph{Introduction to classical integrable systems}. Cambridge Univ. Press, 2003, 602 p.

%\bibitem{BT2}
%O.Babelon, M.Talon. \emph{Riemann surfaces, separation of variables and classical and quantum integrability}, arXiv: hep-th/0209071.

%\bibitem{PBor}
%П. И. Борисова, “Разделение переменных для систем Хитчина типа $D_n$ на гиперэллиптической кривой”, УМН, 76:2(458) (2021),  181–182
%P. Borisova, “Separation of variables for type $D_n$ Hitchin systems on a hyperelliptic curve”, Russian Math. Surveys, 76:2 (2021), 363–365

\bibitem{BorSh}
P. I. Borisova, O. K. Sheinman. \emph{Hitchin Systems on Hyperelliptic Curves}, Proceedings of the Steklov Institute of Mathematics, 2020, Vol. 311, pp. 22--35 [Published in Russian in Trudy Matematicheskogo Instituta imeni V.A. Steklova, 2020, Vol. 311, pp. 27-40].

\bibitem{BEL}
V. M. Buchstaber, V. Z. Enolski, D. V. Leykin. \emph{Multi-Dimensional Sigma-Functions}, arXiv:1208.0990v1 [math-ph], 5 Aug 2012.

\bibitem{Fay}
J.D.Fay. \emph{Theta-functions on Riemann surfaces}. Lecture notes in mathematics, Vol. 352, Springer--Verlag, 1973.

\bibitem{Dubr_theta}
Dubrovin B.A. \emph{Theta functions and non-linear equations}. Russian Mathematical Surveys, 1981, Vol. 36, Issue 2, P. 11--92

\bibitem{Dubr_RegCh}
Dubrovin B.A. \emph{Riemann surfaces and non-linear equations}. Regular and Chaotic Dynamics, Izhevsk, 2001, 152 p.p. (in Russian).

\bibitem{Gaw}
K.Gawedzki, P. Tran-Ngoc-Bich. Hitchin systems at low genera. J.Math.Phys. 41 (2000) 4695-4712. Arxive: hep-th/9803101.

\bibitem{Previato}
B. van Geemen and E. Previato.  \emph{On the Hitchin system}. Duke Math. J., Volume 85, Number 3 (1996), 659-683. Arxive: alg-geom/9410015.

\bibitem{Geemen_Jong}
B. van Geemen, A.J. de Jong. \emph{On Hitchin's connection}.
J.Amer.Math.Soc., Vol. 11, Num. 1, 1998, P. 189--228.

%\bibitem{GNR}
%A. Gorsky, N. Nekrasov, V. Rubtsov. \emph{Hilbert Schemes, Separated Variables, and D-Branes}. Commun. Math. Phys. 222, 299 -- 318 (2001).

\bibitem{Hitchin}
N.Hitchin.  \emph{Stable bundles and integrable systems}.  Duke Math. J., Vol. 54, No. 1, 91-112 (1987).

%\bibitem{Hur}
%J.Hurtubise.  \emph{Integrable systems and algebraic surfaces}. Duke. Math. J. 83, 19--50 (1996).

%\bibitem{Kr_Lax}
%I. Krichever. \emph{Vector Bundles and Lax Equations on Algebraic Curves}.
%Commun. Math. Phys., Vol. 229, 229--269 (2002).

%\bibitem{Kr_FAN_78}
%%Эллиптические решения уравнения Кадомцева–Петвиашвили и интегрируемые %%системы %частиц”, Функц. анализ и его прил., 14:4 (1980), 45–54;
%I.M.Krichever. \emph{Elliptic solutions to the Kadomtsev--Petviashvili equations and integrable systems of particles}. Funct. Anal. Appl., 14:4 (1980), 282--290

%\bibitem{Aint_sys}
%O.K.Sheinman. \emph{Some integrable systems of algebraic origin and separation of variables}, arXiv: 1712.04422.

\bibitem{Sh_FAN_2019}
%О.К.Шейнман. \emph{Спектральные кривые гиперэллиптических систем Хитчина}.
%Функц. анализ и прил., 2019, Том 53, Вып. 3, Стр. 63--78.
O.K.Sheinman. \emph{Spectral curves of hyperelliptic Hitchin systems}. Funkt. Anal. App., Vol. 53 (2019),  Num. 4, P. 291--303.   arXiv:1806.10178 [math-ph].

\bibitem{Sh_Bin}
O.K.Sheinman, B.Wang. \emph{Hitchin systems: some recent advances}. Russ. Math. Surv.,2024, Vol. 79, N 4, P.683--720.

\bibitem{Sh_SO4}
%О.К.Шейнман. \emph{Разделение переменных для систем Хитчина со структурной группой $SO(4)$ на кривых рода 2}. Труды МИАН, том 325, № 2, 2024 (готовится к печати).
O.K.Sheinman. \emph{Separation of variables for Hitchin systems with the structure group  $SO(4)$, on genus two curves.} Proc.Steklov Inst., v, Vol. 325, P.292--303.

%\bibitem{Skl}
%E.K.Sklyanin. \emph{Separation of variables. New trends}, Prog. Theor. Phys.
%Suppl. 118 (1995), 35-60; arXiv:solv-int/9504001

%\bibitem{Taim}
%Тайманов, Мат сборник

%\bibitem{DT}
%D.Talalaev, \emph{Riemann bilinear form and Poisson structure
%in Hitchin-type systems}, arXiv:hep-th/0304099.

\bibitem{Ts}
%А.В.Цыганов. Интегрируемые системы в методе разделения переменных. Ижевск: Регулярная и хаотическая динамика, 2005.
A.V.Tsiganov. \emph{Integrable systems in frame of Separation of Variables}. Izhevsk, Regular and chaotic dynamics, 2005 (in Russian).


%%%%%%%%%%%%%%%%%%%%%%%%%%%%%%%%%%%%%%%%%%%%%%%%%%%%
\end{thebibliography}

\end{document}
%%%%%%%%%%%%%%%%%%%%%%%%%%%%%%%%%%%%%%%%%%%%%%%%
%%   THE END
%%%%%%%%%%%%%%%%%%%%%%%%%%%%%%%%%%%%%%%%%%%